\newtheorem{theorem}{Theorem}
\newtheorem{corollary}{Corollary}
\newtheorem{example}{Example}
\newtheorem{problem}{Problem}
\newtheorem{lemma}{Lemma}
\newtheorem{proposition}{Proposition}
\newtheorem{claim}{Claim}
\theoremstyle{definition}
\newtheorem{definition}{Definition}
\newtheorem{remark}{Remark}
\newcommand{\e}{\varepsilon}
\newcommand{\IR}{\mathbb R}
\newcommand{\IZ}{\mathbb Z}
\newcommand{\IQ}{\mathbb Q}
\newcommand{\IN}{\mathbb N}
\newcommand{\w}{\omega}
\newcommand{\defeq}{:=}
\title{Plastic metric spaces and groups}
\author{Taras Banakh, Oles Mazurenko, and Olesia Zavarzina}
\email{t.o.banakh@gmail.com, oles.mazurenko@lnu.edu.ua, olesia.zavarzina@yahoo.com}
\address{T.~Banakh, O.~Mazurenko: Ivan Franko National University of Lviv, Ukraine}
\address{O.~Zavarzina: V.N.~Karazin Kharkiv National University, Kharkiv, Ukraine}
\keywords{Plastic metric space, strictly convex metric space, $k$-crowded space, non-expansive bijection, isometry, subgroup}
\subjclass{03E50, 20K27, 46B04, 54E40}
\begin{document}
	
\begin{abstract}
 A metric space is {\em plastic} if all its non-expansive bijections are isometries. We prove three main results: (1) every countable dense subspace of a normed space is not plastic, (2) every $k$-crowded separable metric space contains a plastic dense subspace, and (3) every strictly convex separable metric group contains a plastic dense subgroup.
\end{abstract}
\maketitle

\section{Introduction}

This paper is devoted to plastic metric spaces and groups. A metric space $(X,d)$ is called {\em plastic} if every non-expansive bijection of $X$ is non-contractive. A map $f:X\to X$  is  {\em non-expansive} (resp. {\em non-contractive}) if $d(f(x),f(y))\le d(x,y)$ (resp. $d(f(x),f(y))\ge d(x,y)$) for all $x,y\in X$. If $f$ is both non-expansive and non-contractive, then $f$ is called an {\em isometry}. A metric group is plastic if its underlying metric space is plastic. An simple example of a non-plastic metric group is the real line $\mathbb R$ or, more generally, any normed space over the field of real numbers. In some references (e.g., \cite{KZ}, \cite{LangZav}, \cite{Leo}, \cite{Naimpally}, \cite{Zav}), plastic metric spaces are called {\em expand-contract plastic}.
\smallskip

	 
	

The first ideas of this paper appeared during the problem session at the V International conference dedicated to the 145th anniversary of Hans Hahn which took place in September, 2024, in Chernivtsi, Ukraine. The authors discussed unsolved problems on the plasticity of subsets of the real line. In spite of simplicity of formulations, problems related to plasticity are often difficult to deal with. For instance, no simple characterization of plastic subsets of the reals is known, not to mention general metric spaces. Some initial results concerning plastic subspaces of reals were obtained in \cite{LangZav}. Regarding general metric spaces, it is known that every totally bounded metric space is plastic, even is a stronger sense, see \cite{Naimpally}. Roughly speaking, the strong plasticity of a metric space means that increase of a distance between some pair of points under action of any bijection is necessarily compensated by decrease  of the distance between some other pair of points of this space. More details and results about strong plasticity and so-called uniform plasticity for bijections of coinciding or  different metric spaces can be found in \cite{KZ}.

There is a number of publications devoted to plasticity of the unit balls of Banach spaces and to extension of this problem: for which Banach spaces $X$ and $Y$ any non-expansive bijection between its unit balls is an isometry (see \cite{AnKaZa},  \cite{KZ2}, \cite{HLZ}, \cite{Leo}, \cite{Fakhoury}, \cite{CKOW2016}, \cite{Zav}). For some Banach spaces it was possible to prove that a non-expansive bijection between the unit balls is an isometry only some  additional conditions imposed on the mapping (which may be called conditional plasticity) \cite{Leo}, \cite{Kurik}. However, the problem of plasticity for the unit ball of an arbitrary Banach space is still open as well as its extension to the case of two different spaces. At least there is no examples of Banach spaces with non-plastic unit ball. On the other hand, there are plastic and non-plastic ellipsoids in Hilbert spaces. The full classification of so called linearly plastic ellipsoids in Hilbert spaces was given in \cite{Zav2} in terms of semi-axes of an ellipsoid and in \cite{KarZav} in terms of spectrum of the self-adjoint operator, corresponding to the ellipsoid.
\smallskip

This paper contains three principal results. The first of them establishes non-plasticity of countable dense subspaces in normed spaces. A subspace $X$ of a metric space $Y$ is called {\em plastic} if $X$ is plastic with respect to the metric inherited from $Y$. A set $X$ in a space $Y$ is {\em dense} if it intersects every nonempty open subset of $Y$.

\begin{theorem}\label{t:main1} Every countable dense subspace of any normed space is not plastic.
\end{theorem}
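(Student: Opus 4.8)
The plan is to prove non-plasticity by constructing, for a given countable dense set $X$ in a normed space $Y$, a \emph{non-expansive bijection} $f\colon X\to X$ that strictly decreases the distance between some pair of points; such an $f$ is not non-contractive, hence not an isometry, so $X$ is not plastic. I would build $f$ as the union of an increasing chain of finite partial injections $f_0\subseteq f_1\subseteq\cdots$ of $X$, produced by a back-and-forth recursion along a fixed enumeration of $X$ used both as domain and as range, so that the resulting $f=\bigcup_n f_n$ is total and surjective, hence a bijection. Throughout I would maintain the invariant that each $f_n$ is \emph{strictly contracting}, i.e.\ $\|f_n(x)-f_n(y)\|<\|x-y\|$ for all distinct $x,y$ in its domain. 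This guarantees the final non-isometry and supplies the geometric slack needed to extend.

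To initialise, pick by density two pairs $p_1\mapsto q_1$, $p_2\mapsto q_2$ with $\|q_1-q_2\|<\|p_1-p_2\|$; then $\|f(p_1)-f(p_2)\|<\|p_1-p_2\|$ will witness non-plasticity. The \emph{back} half-step, enlarging the range to cover a prescribed $w\in X$, is easy: since $Y$ is a nonzero normed space it is unbounded, so I can choose a fresh preimage $p\in X$ so far from every $p_i$ already used that $\|p-p_i\|>\|w-q_i\|$ for all $i$, which keeps the map strictly contracting. The \emph{forth} half-step, extending the domain to a prescribed $p\in X$, is where the difficulty concentrates: writing $r_i:=\|p-p_i\|$, a valid image $q$ must lie in $\bigcap_i \overline{B}(q_i,r_i)$, and to preserve strictness I want it in the open intersection $\bigcap_i B(q_i,r_i)$; density of $X$ then yields such a $q\in X$ distinct from all $q_i$.

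The crux is therefore the following \emph{one-point extension lemma}: for strictly contracting finite data in a normed space, the open ball-intersection $\bigcap_i B(q_i,r_i)$ is nonempty. This is the main obstacle I expect to fight. The \emph{pairwise} intersections are automatic, since $\|q_i-q_j\|<\|p_i-p_j\|\le r_i+r_j$, but in a general normed space pairwise intersection does \emph{not} force a common point: Helly's theorem requires $(\dim+1)$-wise intersection once $\dim Y\ge 2$, and Kirszbraun-type extension (which would settle it) is available only in inner-product spaces. Thus the lemma cannot be obtained from soft convexity alone and must exploit the interplay of strictness, convexity of balls, and density.

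One natural mechanism to force the lemma is to carry a global strict contraction along the recursion: maintain, besides $f_n$, a map $F_n\colon Y\to Y$ with Lipschitz constant $<1$ and $F_n|_{\operatorname{dom}f_n}=f_n$. The seed admits such an $F_0$ explicitly, namely the rank-one affine contraction $F_0(x)=q_1+\tfrac{\varphi(x-p_1)}{\|p_2-p_1\|}(q_2-q_1)$, where $\varphi$ is a Hahn--Banach functional with $\|\varphi\|=1$ and $\varphi(p_2-p_1)=\|p_2-p_1\|$, so that $F_0(p_1)=q_1$, $F_0(p_2)=q_2$ and $\operatorname{Lip}(F_0)=\|q_2-q_1\|/\|p_2-p_1\|<1$. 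If a global strict contraction $F_n$ exists, then $\|F_n(p)-q_i\|\le\operatorname{Lip}(F_n)\,\|p-p_i\|<r_i$, so $F_n(p)$ lies in the open intersection and the forth-step succeeds. The delicate and, I expect, hardest remaining point is to \emph{update} $F_n$ to a global strict contraction $F_{n+1}$ realising the augmented finite map after each half-step (the forth image $q$ being chosen near but not equal to $F_n(p)$), since this is itself a Lipschitz-extension problem for $Y$. I would attack this either by proving the extension lemma intrinsically from the normed structure, or by choosing images with enough foresight that extendability propagates; once the recursion runs to completion, $f=\bigcup_n f_n$ is the desired non-expansive, non-isometric bijection of $X$.
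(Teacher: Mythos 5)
Your overall strategy (produce a strictly contracting bijection of $X$ by a back-and-forth recursion) is a reasonable first instinct, and you have correctly located where it breaks: the forth half-step. But the proposal does not close that gap, and as it stands it is not a proof. The ``one-point extension lemma'' you need --- that for strictly contracting finite data $p_i\mapsto q_i$ in an arbitrary normed space the intersection $\bigcap_i B(q_i,\|p-p_i\|)$ is nonempty --- is essentially the one-point Kirszbraun extension property, and it is \emph{false} in general normed spaces; it holds in Hilbert spaces and in hyperconvex spaces, but already in finite-dimensional $\ell_p$-spaces with $p\ne 2$ one can prescribe finitely many strictly contracting pairs and a new point $p$ for which no admissible image exists. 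The strictness of the contraction only makes the constraints open; it does not force the intersection to be nonempty. Your fallback --- maintaining a globally defined strict contraction $F_n\colon Y\to Y$ interpolating the finite partial map --- is an equivalent Lipschitz-extension problem in disguise: extending a map defined on $n$ points of a general normed space to all of $Y$ is only possible, in general, at the cost of multiplying the Lipschitz constant by a factor growing with $n$, which destroys the invariant $\operatorname{Lip}(F_n)<1$. You acknowledge this is the hard point and propose to ``attack'' it, but no mechanism is given, so the central step of the argument is missing.

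The paper avoids this obstruction entirely by not building the bijection pointwise. It starts from a \emph{globally defined} contraction of the completion $\overline Y$ (namely $y\mapsto\frac12 y$), whose restriction to $X$ is contractive but lands on the wrong countable dense set $f[X]$, and then corrects the range by composing with an $\e$-isometry $g$ of $\overline Y$ carrying $f[X]$ onto $X$; such a $g$ exists because Banach spaces are Lipschitz countable dense homogeneous (Dijkstra). The composition $g\circ f{\restriction}_X$ is a bijection of $X$ with Lipschitz constant $(1+\e)(1-\e)=1-\e^2<1$. In other words, the genuinely hard combinatorial/geometric content that your back-and-forth would have to supply is outsourced to Dijkstra's LCDH theorem (which is itself proved by a careful back-and-forth, but one that produces small perturbations of the identity rather than contractions, and is therefore feasible). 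To repair your proposal you would either need to restrict to Hilbert or hyperconvex spaces (where your extension lemma does hold), or import an ingredient of comparable strength to the LCDH property.
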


On the other hand, we have the following example, first constructed by Wojciech Bielas \cite{Bielas}.

\begin{example}\label{e:Bielas} The real line contains a plastic dense subspace $X$ of cardinality $\aleph_1$ such that every non-expansive bijection of $X$ is the identity map of $X$.
\end{example}

Theorem~\ref{t:main1} and Example~\ref{e:Bielas} imply the following characterization, first observed by Bielas \cite{Bielas}.

\begin{corollary}[Bielas]\label{c:Bielas-CH}The Continuum Hypothesis holds if and only if every dense subspace $X\subseteq\IR$ of cardinality $|X|<|\IR|$ is not plastic.
\end{corollary}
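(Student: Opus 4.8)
The plan is to prove the biconditional by treating the two implications separately, using Theorem~\ref{t:main1} for the ``only if'' direction and Example~\ref{e:Bielas} for the ``if'' direction. Throughout I would rely on the standard reformulation of the Continuum Hypothesis as the statement $|\IR|=\aleph_1$, together with the elementary observation that every dense subset of $\IR$ is infinite (a finite set is nowhere dense in the infinite space $\IR$). Consequently, any dense subspace $X\subseteq\IR$ with $|X|<\aleph_1$ is automatically countably infinite.

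For the forward implication I would assume CH, so that $|\IR|=\aleph_1$. Then the hypothesis $|X|<|\IR|$ collapses to $|X|\le\aleph_0$, and since a dense $X$ is infinite, it is in fact countably infinite. As $\IR$ is a normed space, Theorem~\ref{t:main1} applies verbatim to the countable dense subspace $X$ and yields that $X$ is not plastic. This handles every dense subspace of cardinality strictly below $|\IR|$, giving the ``only if'' direction.

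For the converse I would argue by contraposition: assuming CH fails, I must produce a dense subspace of $\IR$ of cardinality $<|\IR|$ that \emph{is} plastic, thereby negating the right-hand condition. The failure of CH gives $\aleph_1<|\IR|$. Now Example~\ref{e:Bielas} supplies a plastic dense subspace $X\subseteq\IR$ with $|X|=\aleph_1$; since $\aleph_1<|\IR|$, this $X$ satisfies $|X|<|\IR|$ while being plastic, so it witnesses the failure of the right-hand side. Hence the right-hand condition forces CH.

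I expect no genuine obstacle here, since both nontrivial ingredients are already established. The only point demanding care is the cardinal bookkeeping in the two directions: recognizing that under CH the constraint $|X|<|\IR|$ forces countability (so Theorem~\ref{t:main1} is applicable), whereas under $\neg$CH the cardinality $\aleph_1$ of the Bielas example lies strictly below $|\IR|$ (so Example~\ref{e:Bielas} becomes a counterexample to the right-hand condition).
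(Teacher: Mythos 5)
Your argument is correct and is exactly the intended one: the paper states the corollary as an immediate consequence of Theorem~\ref{t:main1} (which under CH covers all dense $X$ with $|X|<|\IR|$, since such $X$ are then countable) and Example~\ref{e:Bielas} (which under $\neg$CH gives a plastic dense subspace of cardinality $\aleph_1<|\IR|$). Your cardinal bookkeeping matches what the paper leaves implicit.
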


All normed spaces considered in this paper are over the field of real numbers.

The countability is essential in Theorem~\ref{t:main1} because of the following theorem that establishes the existence of plastic dense subspaces in every $k$-crowded separable metric space. A metric space $X$ is {\em $k$-crowded} if each nonempty open subset of $X$ contains an uncountable compact set. 

\begin{theorem}\label{t:main2} Every $k$-crowded separable metric space  contains a plastic dense subspace $X$ such that every non-expansive bijection of $X$ is the identity map of $X$.
\end{theorem}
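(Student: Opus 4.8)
The plan is to construct $X$ by a transfinite recursion of length $\mathfrak c$ inside $Y$, killing every potential nontrivial non-expansive bijection while simultaneously arranging density. The conceptual starting point is that a non-expansive bijection $f$ of a dense subspace $X$ of $Y$ is uniformly continuous, hence extends uniquely to a non-expansive self-map $\bar f$ of the completion $\hat Y$ of $Y$; moreover, since $X$ is dense, $f=\mathrm{id}_X$ if and only if $\bar f=\mathrm{id}_{\hat Y}$. As $\hat Y$ is a separable (Polish) space, a continuous self-map is determined by its values on a fixed countable dense set, so there are at most $\mathfrak c$ non-expansive self-maps of $\hat Y$. I would enumerate the non-identity ones as $\{g_\xi:\xi<\mathfrak c\}$. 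It then suffices to build a dense $X\subseteq Y$ of cardinality $\mathfrak c$ such that for each $\xi$ the restriction $g_\xi|_X$ fails to be a bijection of $X$ onto $X$: a nontrivial non-expansive bijection $f$ would yield $\bar f=g_\xi$ for some $\xi$ with $g_\xi|_X=f$ a bijection, a contradiction, whence every non-expansive bijection of $X$ is the identity (and in particular $X$ is plastic).

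The role of $k$-crowdedness is to supply, in a ZFC-provable way, continuum-many candidate points inside every nonempty open set. Fix a countable base $\{U_n:n\in\w\}$ of $Y$. Each nonempty open $U\subseteq Y$ contains an uncountable compact set, which by the perfect set property of uncountable compact metric spaces contains a topological copy of the Cantor set and hence has cardinality $\mathfrak c$; thus every nonempty open subset of $Y$ meets $Y$ in $\mathfrak c$ points. Throughout the recursion I maintain a set $F$ of forbidden points (those I have committed to keeping out of $X$), always of cardinality $<\mathfrak c$, so that the available points of any prescribed open set still number $\mathfrak c$.

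At stage $\xi$ I first discharge a density requirement by choosing, for the appropriate basic set $U_n$ (visited cofinally via a standard pairing of $\mathfrak c$ with the two task lists), a fresh point of $U_n\cap Y$ outside $F$ and outside the already chosen set $X_{<\xi}$; this is possible since $|U_n\cap Y|=\mathfrak c$. I then kill $g_\xi$. Choosing $p$ with $g_\xi(p)\neq p$ and disjoint open balls $A\ni p$, $B\ni g_\xi(p)$ with $g_\xi(A)\subseteq B$, I split into two cases. If some available point $x\in A\cap Y$ (in the uncountable compact subset of $A$, outside $F\cup X_{<\xi}$) satisfies $g_\xi(x)\notin X_{<\xi}\cup F$, I add $x$ to $X$ and put $g_\xi(x)\in B$ into $F$; since $A\cap B=\varnothing$ this is consistent, and in the final set $g_\xi(x)\notin X$, so $g_\xi|_X$ is not a self-map of $X$. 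Otherwise $g_\xi$ sends the $\mathfrak c$ available points of $A$ into the set $X_{<\xi}\cup F$ of size $<\mathfrak c$, so some fiber is nontrivial and I may add two distinct available points with equal $g_\xi$-image, making $g_\xi|_X$ non-injective. In either case $g_\xi|_X$ is not a bijection of $X$.

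The step I expect to be the main obstacle is precisely the simultaneous maintenance of density and killing under the cardinality constraint: I must guarantee at every stage that deleting the $<\mathfrak c$ forbidden points never exhausts a target open set nor an available ball $A$. This is exactly where the compactness built into ``$k$-crowded'' is essential, upgrading ``uncountable'' to ``cardinality $\mathfrak c$'' via the perfect set property; the two-case analysis for $g_\xi$ is the device that makes killing always possible without knowing in advance whether $g_\xi$ is injective. The bookkeeping then ensures that every $U_n$ is used cofinally (so $X$ is dense in $Y$) and every $g_\xi$ is killed, completing the construction.
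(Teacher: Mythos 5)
Your proposal is correct, and although it shares the paper's overall skeleton---extend any non-expansive bijection of $X$ to a non-expansive self-map of the completion, observe there are at most $\mathfrak c$ such maps, and diagonalize by a transfinite recursion of length $\mathfrak c$ while maintaining a forbidden set of size $<\mathfrak c$---your killing mechanism is genuinely different from the paper's. The paper builds $X$ inside a $\sigma$-compact dense set $K=\bigcup_n K_n$ and requires $X$ to meet \emph{every} Cantor set of $K$ (continuum many density tasks rather than your countably many), and its diagonalization condition is global: if the set $A_\alpha=\{x\in K: f_\alpha(x)\notin X_{<\alpha}\cup\{x\}\}$ has cardinality $\mathfrak c$, one point is pushed into the forbidden set. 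The price is a longer verification: when $|A_\alpha|<\mathfrak c$ one must still show that the fibers $f_\alpha^{-1}(y)\cap K$ over $y\in X_{<\alpha}$ are countable (an uncountable fiber would contain continuum many disjoint Cantor sets, each meeting $X$, contradicting bijectivity), so that $f_\alpha$ fixes a dense set and is therefore the identity. Your local two-case argument near a point $p$ with $g_\xi(p)\ne p$---either forbid some value $g_\xi(x)$, or produce a collision by pigeonhole when $g_\xi$ maps $\mathfrak c$ available points into a set of size $<\mathfrak c$---short-circuits that entire verification and is arguably cleaner for Theorem~\ref{t:main2} itself. What the paper's heavier construction buys is reusability: meeting every Cantor set gives the stronger $k$-density of $X$, and the global form of the diagonalization is the one that survives in the proof of Theorem~\ref{t:main3}, where each step must add a whole generated subgroup rather than one or two points and your local kill would not adapt directly. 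One detail worth making explicit in your write-up: the point $p$ lives in the completion $\hat Y$, so you should note that $A\cap Y$ is a nonempty open subset of $Y$ (by density of $Y$ in $\hat Y$) before invoking $k$-crowdedness to obtain $\mathfrak c$ available points in it; as written this is implicit but correct.
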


Our last principal result treats plastic metric abelian groups. A {\em metric group} is a group endowed with a translation-invariant metric. In each metric group, the inversion and all translations are isometries, so metric groups necessarily have many isometries. More precisely, for any elements $a,b$ of a metric group $G$ and any sign $s \in\{-1,1\}$, the function $G\to G$, $x\mapsto ax^sb$, is an isometry of $G$. We define a metric group $G$ to be {\em plastically rigid} if each non-expansive bijection $f$ of $G$ is of the form $f(x)=ax^sb$ for some $a,b\in G$ and $s\in\{-1,1\}$. It is clear that each plastically rigid metric group is plastic (as a metric space).

Our third principal theorem establishes the existence of a plastically rigid dense subgroup in every strictly convex metric abelian group. A metric space $(X,{\sf d})$ is called {\em strictly convex} if for every points $x,y\in X$ and positive real numbers $a,b$ with $a+b={\sf d}(x,y)$, there exists a unique point $z\in X$ such that ${\sf d}(x,z)=a$ and ${\sf d}(z,y)=b$. The strict convexity plays an important role in Banach space geometry, optimization, approximation theory, and fixed point theory (cf. \cite{Barbu}, \cite{BorLewis}, \cite{GK}, \cite{Ist}). By  \cite{BM}, every strictly convex metric abelian group has the canonical structure of a normed space.

\begin{theorem}\label{t:main3} Every strictly convex separable metric group contains a plastically rigid dense subgroup.
\end{theorem}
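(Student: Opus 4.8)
The plan is to reduce to a normed space and then build the required subgroup by a transfinite recursion. By \cite{BM}, the strictly convex separable metric group $G$ carries the canonical structure of a separable normed space $E$, and strict convexity of the metric translates into strict convexity of the norm. Writing the operation additively, the isometries $x\mapsto ax^sb$ become the affine maps $x\mapsto a+sx$ with $a\in E$ and $s\in\{-1,+1\}$. Thus it suffices to construct a dense $\IQ$-linear subspace $H\le E$ (which is in particular a subgroup, and contains all midpoints of its points) such that every non-expansive bijection $f$ of $H$ has the form $x\mapsto a\pm x$ with $a\in H$. Composing $f$ with the translation by $-f(0)$, which is an isometry of $H$ because $f(0)\in H$, I may assume $f(0)=0$, and it then suffices to prove $f=\mathrm{id}_H$ or $f=-\mathrm{id}_H$. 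Note in passing that $H$ is necessarily uncountable: by Theorem~\ref{t:main1} every countable dense subgroup of $E$ is non-plastic, while plastic rigidity implies plasticity.

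I would split the conclusion for a non-expansive bijection $f$ with $f(0)=0$ into two parts: (A) $f$ is an isometry, and (B) every isometry of $H$ fixing $0$ equals $\pm\mathrm{id}_H$. For (B), strict convexity yields unique metric midpoints, so an isometry fixing $0$ preserves midpoints and is therefore $\IQ$-additive on $H$; being $1$-Lipschitz it extends to a linear isometry $T$ of the completion $\bar E$ with $T(H)=H$, by a Mazur--Ulam-type argument. Hence (B) reduces to arranging that the only linear isometries of $\bar E$ preserving $H$ are $\pm\mathrm{id}$, which I would secure by choosing the generators of $H$ in sufficiently general position. I expect this to be the more routine of the two parts.

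The heart of the matter is (A): forcing every non-expansive bijection to be non-contractive. Since no normed space over $\IR$ is itself plastic, this rigidity cannot be inherited from $E$; it must be engineered into a sufficiently sparse and generic subgroup, in the spirit of Example~\ref{e:Bielas} and Theorem~\ref{t:main2}. I would construct $H=\bigcup_{\alpha<\w_1}H_\alpha$ as an increasing chain of countable $\IQ$-subspaces, ensuring at successive stages that (i) $H$ is dense, (ii) the general-position property needed for (B) holds, and (iii) every potential strict contraction is defeated by a \emph{witness}: using strict convexity (hence the unique point dividing a given segment in a prescribed ratio), I would, for each threatened contraction of a pair of already-constructed points, adjoin a point whose metric position relative to the existing ones is so rigid that any non-expansive bijection realizing that contraction is forced to violate injectivity or non-expansiveness. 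The delicate point, and the step I expect to be the main obstacle, is reconciling the global nature of the bijection $f$ with the local, stage-by-stage witnesses: one must frame the threats so that any single global non-isometry is detected by some countable configuration already fixed at an earlier stage, whence killing all such configurations suffices. Managing this book-keeping while simultaneously preserving the subgroup structure and the general-position property for (B) is precisely where the strict convexity of the norm does the essential work.

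Once the construction is complete, an arbitrary non-expansive bijection $f$ of $H$ becomes, after subtracting $f(0)$, a non-expansive bijection fixing $0$; by (A) it is an isometry, by (B) it equals $\pm\mathrm{id}_H$, and restoring the translation gives $f(x)=f(0)\pm x$. Therefore $H$ is a plastically rigid dense subgroup, which proves Theorem~\ref{t:main3}.
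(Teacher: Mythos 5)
Your reduction to a strictly convex normed space via \cite{BM} and the overall shape of the argument (a transfinite recursion producing a dense subgroup, followed by a classification of the surviving bijections) agree with the paper, but the proposal stops short of the two ideas that actually carry the proof, and the step you yourself flag as ``the main obstacle'' in part (A) is exactly the one left unresolved. The missing idea is this: a non-expansive bijection $f$ of a dense subgroup $H$ is uniformly continuous and therefore extends uniquely to a non-expansive self-map of the completion $\overline X$; since such maps are determined by their values on a countable dense set, there are only $\mathfrak c$ of them. The paper accordingly runs the recursion over length $\mathfrak c$ (not $\w_1$) and diagonalizes against each non-expansive self-map $f_\alpha$ of $\overline X$ directly: if $f_\alpha$ disagrees with every map $x\mapsto a+s\cdot x$ (with $a\in H_{<\alpha}$, $s\in\{-1,1\}$) on a set of size $\mathfrak c$, one adjoins a witness point $x_\alpha$ to $H$ and places $f_\alpha(x_\alpha)$ into a forbidden set $V$ kept disjoint from $H$, so that $f_\alpha$ can never restrict to a bijection of $H$. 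Your scheme of length $\w_1$, with threats indexed by contractions of pairs of already-constructed points, cannot account for all $\mathfrak c$ potential extensions when $\mathfrak c>\aleph_1$, and you give no mechanism by which adjoining a point in ``rigid metric position'' actually defeats a contraction; the forbidden-set device is what does that work.

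The second gap is the endgame. After the diagonalization one knows only that $f_\alpha$ agrees with maps of the form $x\mapsto a+s\cdot x$ outside a set of cardinality $<\mathfrak c$; one does not yet know that $f$ is an isometry, so your split into (A) followed by a Mazur--Ulam classification (B) cannot be executed in that order. The paper instead upgrades ``affine off a small set'' to ``affine everywhere,'' using strict convexity in an essential way: the sets $F_a^s\defeq\{x: f_\alpha(x)=a+s\cdot x\}$ are shown to be closed and convex (via the metric-interval Lemmas~\ref{l:3}--\ref{l:5}), the subgroup is built to meet every Cantor set so that each line in the span $K$ is covered by the sets $F_a^s$ up to a small remainder, and a connectedness/Baire argument then forces a single $F_a^s$ to contain every line, hence all of $K$. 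Without some substitute for this upgrade, and without the completion-extension counting above, the proposal remains a program rather than a proof.
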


Theorems~\ref{t:main1}, \ref{t:main2}, \ref{t:main3} will be proved in Sections~\ref{sec1}, \ref{sec2}, \ref{sec3}, respectively. In Section~\ref{s:Cantor} we recall a classical result on Cantor sets in analytic spaces, and in Section~\ref{sec2a} we prove some auxiliary results that are used in the proof of Theorem~\ref{t:main3} (which is long and difficult). In the final Section~\ref{s:openprob} we collect some additional remarks and open problems related to plasticity.



\section{Proof of Theorem~\ref{t:main1}}\label{sec1}

We shall deduce Theorem~\ref{t:main1} from suitable known results on Lipschitz countable dense homogeneous metric spaces.

First we recall the notion of an $\e$-isometry, where $\e$ is a positive real number.

\begin{definition} A  function $f:X\to Y$ between two metric spaces $(X,d_X)$ and $(Y,d_Y)$ is called an {\em $\e$-isometry} if
$$(1-\e){\cdot}d_X(x,y)\le d_Y(f(x),f(y))\le (1+\e){\cdot}d_X(x,y)$$for all $x,y\in X$.
\end{definition}


	

\begin{definition} A metric space is called
{\em Lipschitz countable dense homogeneous} (briefly, {\em LCDH}) if for all $\varepsilon>0$ and countable dense sets $A,B\subseteq X$, there exists an $\e$-isometry $f:X\to X$ such $f[A]=B$.
\end{definition}
	
By a result of Dijkstra \cite{Dijkstra}, all Banach spaces are LCDH.
	
	

A function $f:X\to Y$ between metric spaces $(X,d_X)$ and $(Y,d_Y)$ is called {\em contractive} if there exists a real number $c<1$ such that 
$d_Y(f(x),f(y))\le c\cdot d_X(x,y)$ for all $x,y\in X$.

\begin{lemma}\label{l:2} A countable metric space admits a contractive bijection and hence is not plastic whenever its completion is LCDH and admits a contractive bijection.
\end{lemma}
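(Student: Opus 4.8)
The plan is to first dispatch the easy implication and then build a contractive self-bijection of $X$ from the given data. For the easy part, observe that a contractive bijection $f\colon X\to X$ with constant $c<1$ is automatically non-expansive, yet whenever $X$ has at least two points it satisfies $d(f(x),f(y))\le c\cdot d(x,y)<d(x,y)$ for $x\neq y$, so it is not non-contractive. Thus such an $f$ witnesses the non-plasticity of $X$, and the whole problem reduces to producing it.

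Let $\bar X$ denote the completion of $X$ and let $g\colon\bar X\to\bar X$ be a contractive bijection with constant $c<1$. I would first observe that $X$ is a countable dense subset of $\bar X$, and that its image $g[X]$ is again countable and dense. Countability is clear, while density follows from the standard inclusion $g[\bar X]\subseteq\overline{g[X]}$ (valid for the continuous map $g$ applied to the dense set $X$) together with surjectivity $g[\bar X]=\bar X$, which forces $\overline{g[X]}=\bar X$. Here only continuity and surjectivity of $g$ are used, so no appeal to any regularity of $g^{-1}$ is needed.

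Now I would invoke the LCDH property. Fix $\e>0$ small enough that $(1+\e)c<1$, i.e.\ $\e<\tfrac1c-1$. Applying LCDH to the two countable dense sets $g[X]$ and $X$ yields an $\e$-isometry $h\colon\bar X\to\bar X$ with $h[g[X]]=X$. Set $f\defeq(h\circ g)|_X$. Since $\e<1$, the lower bound $(1-\e)d(a,b)\le d(h(a),h(b))$ makes $h$ injective on $g[X]$, and $g$ is injective, so $f$ is an injection $X\to X$; the equality $h[g[X]]=X$ makes it a surjection, hence $f$ is a bijection of $X$. Finally, chaining the contractivity of $g$ with the upper bound of the $\e$-isometry gives $d(f(x),f(y))\le(1+\e)\,d(g(x),g(y))\le(1+\e)c\cdot d(x,y)$ for all $x,y\in X$, and $(1+\e)c<1$, so $f$ is a contractive bijection of $X$, as required.

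The only delicate points are the density of $g[X]$, which is resolved purely by continuity and surjectivity of $g$ and thereby sidesteps any need for good behaviour of the inverse, and the bookkeeping ensuring that the distortion introduced by the corrective $\e$-isometry $h$ does not destroy contractivity, which is handled by the single numerical choice $\e<\tfrac1c-1$. Everything else is routine verification.
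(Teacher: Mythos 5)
Your proof is correct and follows essentially the same route as the paper: push the dense set $X$ forward by the contractive bijection of the completion, then correct with an $\e$-isometry supplied by LCDH to land back on $X$, choosing $\e$ so that the product of the two Lipschitz bounds stays below $1$ (the paper uses $(1+\e)(1-\e)=1-\e^2$ where you use $(1+\e)c$). The only cosmetic point is that your condition $(1+\e)c<1$ does not by itself force $\e<1$ (needed for your injectivity remark), but since you are free to shrink $\e$ this is immaterial.
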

	
\begin{proof} Assume that the completion $\overline X$ of a countable metric space $X$ is LCDH and admits a contractive bijection $f:\overline X\to \overline X$. Let $\mathsf d$ be the metric of the complete metric space $\overline X$. Find a positive real number $\e$ such that $\mathsf d(f(x),f(y))\le (1-\e)\cdot\mathsf d(x,y)$ for all $x,y\in X$.

Since $f$ is continuous and surjective, the set $f[X]$ is dense in $\overline X$. Since $X$ is LCDH, there exists an $\e$-isometry $g$ of $\overline X$ such that $g[f[X]]=X$. Then $h:=g\circ f{\restriction}_X$ is a bijection of $X$ such that a
$${\sf d}(h(x),h(x))={\sf d}(g(f(x)),g(f(y))\le (1+\e)\cdot d(f(x),f(y))\le (1+\e)\cdot(1-\e)\cdot {\sf d}(x,y)=(1-\e^2)\cdot {\sf d}(x,y)$$for all $x,y\in X$,
witnessing that the map $h$ is a contractive bijection of $X$.
\end{proof}

Now we are able to present the {\em proof of Theorem~\ref{t:main1}.} Let $X$ be a countable dense set in a normed space $Y$. Let $\overline Y$ be the completion of the normed space $Y$. Observe that the function $\overline Y\to\overline Y$, $y\mapsto\frac12y$, is a contractive bijection of the Banach space $\overline Y$. By \cite{Dijkstra}, the Banach space $\overline Y$ is LCHD. By Lemma~\ref{l:2}, the countable dense subspace $X$ of $\overline Y$ admits a contractive bijection and hence is not plastic.\hfill$\square$

\section{Construction of a plastic space in Example~\ref{e:Bielas}}

In \cite{Bielas}, Wojciech Bielas constructed an example of a plastic dense subspace $X\subseteq \IR$ of cardinality $|X|=\aleph_1$, thus answering an open problem posed in an earlier version of this paper. In this section we present a simplified version of the construction of Bielas. 

Fix any bijective map $z:\IN\to\IZ$, $z:n\mapsto z_n$, and any strictly decreasing sequence $(\e_n)_{n\in\IN}$ such that $\e_1\le\frac12$ and $\lim_{n\to\infty}\e_n=0$. Choose a set $A\subseteq\IR$ such that for every nonempty open set $U\subseteq\IR$, the intersection $A\cap U$ has cardinality $\aleph_1$. Consider the dense subspace 
$$X:=\bigcup_{n\in\IN}(A\setminus[z_n-\e_n,z_n+\e_n])\cup ([z_n-\e_n,z_n+\e_n]\cap\IQ)$$ of the real line.

\begin{proposition} Every nonexpansive bijection $f:X\to X$ is the identity map of $X$, which implies that the metric space $X$ is plastic.
\end{proposition}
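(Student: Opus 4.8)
The plan is to reduce everything to the behaviour of the unique $1$-Lipschitz extension of $f$ to the whole real line. First I would observe that a non-expansive map is $1$-Lipschitz, hence uniformly continuous, so $f$ extends uniquely to a $1$-Lipschitz map $\bar f:\IR\to\IR$. Since $\bar f(\IR)$ is connected and contains the dense set $\bar f(X)=f(X)=X$, the image $\bar f(\IR)$ is a dense subinterval of $\IR$, hence all of $\IR$; thus $\bar f$ is surjective. The goal then becomes to show that $\bar f$ is the identity of $\IR$, which gives $f=\bar f{\restriction}_X=\mathrm{id}_X$ and, in particular, plasticity of $X$.

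Next I would isolate the metric skeleton of $X$. Call a point $x\in X$ \emph{heavy} if $B(x,r)\cap X$ is uncountable for every $r>0$, and let $U$ be the set of heavy points. Because off the closed set $W:=\bigcup_n[z_n-\e_n,z_n+\e_n]$ the space $X$ coincides with $A$ (uncountable in every interval), while inside each open window $(z_n-\e_n,z_n+\e_n)$ it consists only of rationals, the heavy points are exactly the points of $X$ lying off the open windows; in particular $U$ is dense in the closed set $G:=\IR\setminus\bigcup_n(z_n-\e_n,z_n+\e_n)$, so $\overline U=G$. The key elementary observation is that $f(U)\subseteq U$: if $x$ were heavy but $f(x)$ had a countable neighbourhood $B(f(x),r)\cap X$, then non-expansiveness would send $B(x,r)\cap X$ into it, and injectivity of $f$ would force $B(x,r)\cap X$ to be countable, contradicting heaviness. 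By continuity this upgrades to $\bar f(G)\subseteq G$, and since each connected component of $G$ (a closed ``gap'' between consecutive windows) has connected image, $\bar f$ carries each gap into a single gap.

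The heart of the argument—and the step I expect to be the main obstacle—is to prove that $\bar f$ is monotone (equivalently, injective), since a continuous surjection of $\IR$ is monotone as soon as it is injective. Thus it suffices to exclude local folds of $\bar f$. A fold sitting inside an open window is comparatively harmless, as there $X$ is a totally bounded rational set; the dangerous case is a strict local extremum of $\bar f$ in a heavy region. Here I would argue by contradiction: near such an extremum $\bar f$ has two opposite monotone branches, each of which, being a homeomorphism onto a common value-interval $J$ and meeting $U$ in a set uncountable in every subinterval, carries a dense uncountable subset of $U$ into $X\cap J$. Injectivity of $f$ forces these two images to be disjoint, and I would confront this with surjectivity of $\bar f$ onto the heavy points of $J$, exploiting that $A$ meets every interval in $\aleph_1$ points. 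Making this confrontation rigorous—ruling out that two disjoint uncountable-everywhere images could exactly share the burden of covering $X\cap J$—is the delicate point on which the whole proof turns.

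Once $\bar f$ is known to be a ($1$-Lipschitz) monotone homeomorphism of $\IR$ with $\bar f(X)=X$, the conclusion follows cleanly. Local countability is now a two-sided homeomorphism invariant, so $\bar f$ maps window interiors onto window interiors and hence permutes the components $(z_n-\e_n,z_n+\e_n)$ bijectively. Being $1$-Lipschitz, $\bar f$ cannot increase a window's length, so the induced permutation $\pi$ satisfies $\e_{\pi(n)}\le\e_n$; as $(\e_n)$ is strictly decreasing, a length-nonincreasing bijection of this sequence must be the identity, whence $\bar f$ maps each window $[z_n-\e_n,z_n+\e_n]$ onto itself. If $\bar f$ were decreasing it would reverse the left-to-right order of the (infinitely many) windows while fixing each of them setwise, which is impossible; so $\bar f$ is increasing and therefore fixes every endpoint $z_n\pm\e_n$. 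Finally, on each interval between two consecutive such fixed endpoints (a window or a gap), an increasing $1$-Lipschitz map fixing both endpoints is squeezed, by the two-sided Lipschitz estimate $x\le\bar f(x)\le x$, into the identity. Hence $\bar f=\mathrm{id}_{\IR}$ and $f=\mathrm{id}_X$.
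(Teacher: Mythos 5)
Your endgame is sound and essentially coincides with the paper's: once enough points are known to be fixed, the two-sided Lipschitz squeeze $\bar f(x)\in[2a-x,x]\cap[x,2b-x]=\{x\}$ between consecutive fixed points $a<b$ finishes the proof, and your observations that heavy (condensation) points map to heavy points and that the windows $[z_n-\e_n,z_n+\e_n]$, having pairwise distinct lengths $2\e_n$, can only be permuted identically by a $1$-Lipschitz homeomorphism are both correct. The problem is that everything upstream of this hinges on $\bar f$ being a monotone homeomorphism, and that is exactly the step you do not prove. You candidly describe it as ``the delicate point on which the whole proof turns,'' and the confrontation you sketch does not close the gap: if $\bar f$ folds at a point $c$ in a heavy region, the two monotone branches carry two disjoint subsets of $X$ into $X\cap J$, each meeting every subinterval of $J$ uncountably often --- but nothing prevents two such disjoint sets from coexisting inside $X\cap J$ (which itself meets every subinterval in $\aleph_1$ points), and surjectivity of $f$ on $X$ only guarantees that each point of $X\cap J$ has \emph{some} preimage in $X$, not one near $c$. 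Cardinality bookkeeping alone therefore yields no contradiction, and no argument is offered that uses the actual metric data of the construction at this stage.

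The paper sidesteps monotonicity entirely. It proves by induction along the enumeration $z:\IN\to\IZ$ that $f(z_n)=z_n$ for every $n$: writing $x=f^{-1}(z_n)\in(z_m-\e_m,z_m+\e_m)\cap\IQ$, the case $m<n$ is excluded by the inductive hypothesis together with $\e_1\le\frac12$, and the case $m>n$ (and then $x\ne z_n$ within $m=n$) is excluded because non-expansiveness would force the window endpoint $z_m-\e_m$ --- a condensation point --- into the open window $(z_n-\e_n,z_n+\e_n)$, which contains no condensation points of $X$; here the strict decrease of $(\e_n)$ is what makes the estimate $\e_m<\e_n$ available. With all integers fixed, the squeeze between $\lfloor x\rfloor$ and $\lceil x\rceil$ gives $f=\mathrm{id}_X$ directly, with no need for the extension $\bar f$, its surjectivity, or its monotonicity. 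To salvage your approach you would need either to supply a genuine proof that $\bar f$ has no folds (which appears to require the same window-radius argument the paper uses) or to abandon the monotonicity route in favour of pinning down the integers directly.
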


\begin{proof} A point $x\in X$ is called a {\em condensation point} of $X$ if each neighborhood $O_x$ of $x$ in $X$ has cardinality $|O_x|=|X|=\aleph_1$.  Consider the countable subset $Q\defeq\bigcup_{n\in\IN}([z_n-\e_n,z_n+\e_n]\cap\IQ)$ of $X$ and observe that a point $x\in X$ is a condensation point of the space $X$ if and only if $x\notin Q$. By the bijectivity and continuity of the function $f$, for any condensation point $x$ of $X$, its image $f(x)$ is a condensation point of $X$, which implies that  $f^{-1}[Q]\subseteq Q$.

\begin{claim}\label{cl:f(z)=z} $f(z_n)=z_n$ for all $n\in\IN$.
\end{claim}

\begin{proof} Assume that for some $n\in\IN$ we know that $f(z_k)=z_k$ for all $k<n$. Consider the point $x\defeq f^{-1}(z_n)\in f^{-1}[Q]\subseteq Q$ and observe that $x\in (z_m-\e_m,z_m+\e_m)\cap \IQ$ for some $m\in\IN$. 

Assuming that $m<n$, we can apply the inductive hypothesis and conclude that $f(z_m)=z_m$ and hence $|z_n-z_m|=|f(x)-f(z_m)|\le|x-z_m|<\e_m\le \e_1\le\frac12$ and hence $z_n=z_m$ and $n=m$, which is a contradiction showing that $m\ge n$. 

We claim that $m=n$. To derive a contradiction, assume that $m>n$. If $x\in (z_m-\e_m,z_m]$, then the nonexpansivity of $f$ ensures that $|f(z_m-\e_m)-z_n|=|f(z_m-\e_m)-f(x)|\le|x-(z_m-\e_m)|\le\e_m<\e_n$ and hence $f(z_m-\e_m)\in (z_n-\e_n,z_n+\e_n)$ which is not possible because $z_m-\e_m$ is a condensation point of the set $X$ whereas any point of the interval $(z_n-\e_n,z_n+\e_n)\cap\IQ$ is not condensation in $X$. This contradiction shows that $x\notin [z_m-\e_m,z_m]$. By analogy we can show that $x\notin[z_m,z_m+\e_m]$ and hence $m=n$.  Therefore, $x\in [z_n-\e_n,z_n+\e_n]$. Assuming that $x\in[z_n-\e_n,z_n)$, we conclude that   $|f(z_n-\e_n)-z_n|\le x-(z_n-\e_n)<\e_n$ and hence $f(z_n-\e_n)\in (z_n-\e_n,z_n+\e_n)$ which is not possible because $z_n-\e_n$ is a condensation point of the set $X$ whereas any point of the interval $(z_n-\e_n,z_n+\e_n)\cap\IQ$ is not condensation in $X$. This contradiction shows that $x\notin[z_n-\e_n,z_n)$. By analogy we can prove that $x\notin(z_n,z_n+\e_n]$. Therefore, $x=z_n$ and hence $f(z_n)=z_n$. The Principle of Mathematical induction ensures that $f(z_n)=z_n$ for all $n\in\IN$.
\end{proof}

Claim~\ref{cl:f(z)=z} implies that $f{\restriction}_\IZ$ is the identity map of $\IZ$. To see that the map $f$ is the identity map of $X$, fix any point $x\in X$ and consider the integer numbers 
$$\lfloor x\rfloor\defeq\max\{n\in\IZ:n\le x\}\quad\mbox{and}\quad\lceil x\rceil\defeq \min\{n\in\IZ:x\le n\}.$$ Observe that $f(z)=z$ for $z\in\{\lfloor x\rfloor,\lceil x\rceil\}$. The nonexpansivity of the function $f$ ensures that $f(x)$ belongs to the singleton $\{y\in\IR:|y-\lfloor x\rfloor|\le|x-\lfloor x\rfloor|\}\cap\{y\in\IR:|y-\lceil x\rceil|\le|x-\lceil x\rceil|\}=\{x\}$ and hence $f(x)=x$.
\end{proof}

	

\section{Cantor sets in analytic spaces}\label{s:Cantor}

A metrizable topological spaces is called {\em analytic} if it is a continuous image of a Polish space. A topological space is {\em Polish} if it is separable and its topology is generated by a complete metric.

In the proofs of Theorems~\ref{t:main2} and \ref{t:main3} we shall extensively exploit the following classical fact, due to Souslin \cite[29.1]{Kechris}.

\begin{proposition}\label{p:Cantor-set} Every uncountable analytic space contains a Cantor set.
\end{proposition}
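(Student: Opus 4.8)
The plan is to reduce to the case of a continuous surjection from the Baire space and then carve out a copy of the Cantor set by a Luzin--Cantor scheme. Since $A$ is a nonempty analytic space, it is a continuous image of some Polish space, and every nonempty Polish space is in turn a continuous image of the Baire space $\omega^\omega$; composing, I fix a continuous surjection $f\colon\omega^\omega\to A$. Being a continuous image of a separable metrizable space, $A$ is itself separable and metrizable, so I also fix a compatible metric on $A$ and a countable base $\mathcal B$ of $A$. For $s\in\omega^{<\omega}$ write $N_s\defeq\{\alpha\in\omega^\omega:s\subseteq\alpha\}$ for the corresponding basic clopen set, and call $s$ \emph{large} if $f[N_s]$ is uncountable. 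Since $A=f[N_\emptyset]$ is uncountable, the empty sequence is large.

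The crucial step is the following splitting property: if $s$ is large, then there are incompatible extensions $t_0,t_1\supseteq s$ that are both large and whose images $f[N_{t_0}],f[N_{t_1}]$ lie in disjoint open balls. To prove it, let $D$ be the set of those $x\in f[N_s]$ possessing a neighbourhood meeting $f[N_s]$ in a countable set; covering each such point by a basic set from $\mathcal B$ shows that $D$ is a countable union of countable sets, hence countable, so $f[N_s]\setminus D$ is uncountable and contains two distinct points $a\neq b$, each a condensation point of $f[N_s]$. Choose disjoint open balls $B_a\ni a$ and $B_b\ni b$; then $B_a\cap f[N_s]$ and $B_b\cap f[N_s]$ are uncountable. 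By continuity $f^{-1}[B_a]\cap N_s$ is open with uncountable image, and an open subset of $\omega^\omega$ with uncountable image must contain some basic clopen $N_{t_0}$ of uncountable image, because an uncountable set is not a countable union of countable sets; thus $t_0\supseteq s$ is large with $f[N_{t_0}]\subseteq B_a$. Symmetrically one obtains a large $t_1\supseteq s$ with $f[N_{t_1}]\subseteq B_b$, and since $B_a\cap B_b=\emptyset$ the sets $N_{t_0},N_{t_1}$ are disjoint, so $t_0,t_1$ are incompatible.

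With the splitting property in hand I would build, by recursion on $2^{<\omega}$, large sequences $(s_\sigma)_{\sigma\in2^{<\omega}}$ such that $s_\emptyset=\emptyset$, each pair $s_{\sigma^\frown0},s_{\sigma^\frown1}$ is produced from $s_\sigma$ by the splitting property, and additionally $\mathrm{diam}(N_{s_\sigma})\to0$ with $N_{s_{\sigma^\frown i}}\subseteq N_{s_\sigma}$ along every branch (the latter two being arranged by passing to longer extensions and recording the enclosing ball $B_a$ or $B_b$ attached to each node). For $\beta\in2^\omega$ the nested sets $N_{s_{\beta\restriction n}}$ have diameters tending to $0$, so by completeness of $\omega^\omega$ their intersection is a single point $\alpha_\beta$, the assignment $\beta\mapsto\alpha_\beta$ is continuous, and hence so is $g\colon2^\omega\to A$, $g(\beta)\defeq f(\alpha_\beta)$. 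If $\beta\neq\beta'$ branch first at a node $\sigma$, then $g(\beta)$ and $g(\beta')$ lie in the disjoint balls attached to $s_{\sigma^\frown0}$ and $s_{\sigma^\frown1}$, so $g$ is injective. Being a continuous injection from the compact space $2^\omega$ into the Hausdorff space $A$, the map $g$ is a homeomorphism onto its image, which is therefore a Cantor set contained in $A$.

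The main obstacle is the splitting property; everything downstream is routine Cantor-scheme bookkeeping, whereas the heart of the matter is producing, at each large node, two incompatible large successors with separated images. This is exactly where uncountability is exploited twice: first through the condensation-point argument that yields two far-apart condensation points of $f[N_s]$, and then through the observation that an uncountable image cannot be a countable union of countable pieces, which lets us descend to basic clopen sets while preserving largeness. The separation of the two branches is precisely what guarantees injectivity of $g$, and thus that the resulting copy of $2^\omega$ is genuinely embedded rather than merely continuously mapped.
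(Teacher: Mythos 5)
Your proof is correct, but there is nothing in the paper to compare it against: the paper does not prove Proposition~\ref{p:Cantor-set} at all, citing it as a classical theorem of Souslin via \cite[29.1]{Kechris}. What you have written is essentially the standard textbook argument for the perfect set property of analytic sets, specialized to the metrizable setting of the paper: reduce to a continuous surjection $f\colon\omega^\omega\to A$, call $s$ large when $f[N_s]$ is uncountable, and run a Cantor scheme whose splitting step is powered by (i) the fact that an uncountable subset of a second countable space has uncountably many condensation points, and (ii) the fact that an open subset of $\omega^\omega$ with uncountable image must contain a basic clopen set with uncountable image, since $\omega^{<\omega}$ is countable and a countable union of countable sets is countable (a use of countable choice, harmless here). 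All the details check out: the images of the two successors sit in disjoint balls, which forces incompatibility of $t_0,t_1$ and, downstream, injectivity of $g$; largeness guarantees the sets $N_{s_\sigma}$ are nonempty, so completeness of $\omega^\omega$ together with $\operatorname{diam}(N_{s_\sigma})\to 0$ (arranged by lengthening, which preserves largeness because $N_s=\bigcup_k N_{s^\frown k}$) yields a well-defined continuous $\beta\mapsto\alpha_\beta$; and a continuous injection of the compact space $2^\omega$ into a Hausdorff space is an embedding, so the image is a Cantor set in the sense the paper uses (a topological copy of $\{0,1\}^\omega$, by Brouwer's characterization). In short, your proposal supplies a complete, self-contained proof of a statement the paper leaves to the literature, and it is the same proof one finds behind the citation.
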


By a {\em Cantor set} we understand any topological copy of the Cantor cube $\{0,1\}^\w$. By the classical Brouwer's Theorem~\cite[7.4]{Kechris}, a nonempty topological space is a Cantor set if and only if it is zero-dimensional, compact, metrizable, and has no isolated points. We recall that a topological space is {\em zero-dimensional} if it has a base of the topology, consisting of clsed-and-open sets. 

\section{Proof of Theorem~\ref{t:main2}}\label{sec2}

Given any $k$-crowded separable metric space $Y$, we will construct a dense subspace $X$ of $Y$ such that every non-expansive bijection of $X$ is the identity. This property of $X$ ensures that it is plastic. We recall that a space is {\em $k$-crowded} if every nonempty open sets in this space contains an uncounable compact subset.
\smallskip

	
	


		\textit{Construction of $X$}. Since the metric space $Y$ is separable, its topology is second countable and hence admits a countable base $\mathfrak{B} = \{B_n: n \in \omega\}$ consisting of non-empty open sets. Since $Y$ is $k$-crowded,  every set $B_n$ contains an uncountable compact set $K_n$. Consider the $\sigma$-compact dense subspace $K = \bigcup_{n \in \omega} K_n$ in $X$.  The plastic dense space $X$ will be constructed as a dense subspace of the $\sigma$-compact space $K$.

Let $\mathcal C$  be the family of all Cantor sets in $K$. Proposition~\ref{p:Cantor-set} implies that the uncountable $\sigma$-compact space $K$ contains a Cantor set, and hence the family $\mathcal C$ has cardinality of continuum $\mathfrak c$. Write the cardinal $\mathfrak c=[0,\mathfrak c)$ as the union $\mathfrak c=\Omega_0\cup\Omega_1$ of two disjoint sets of cardinality $\mathfrak c$ such that $0\in\Omega_0$. The family $\mathcal C$ has cardinality $\mathfrak c$ and hence can be written as $\{C_\alpha:\alpha\in\Omega_0\}$.

Let $\overline{Y}$ be the completion of the metric space $K$, and let $\mathcal{F}$ 
be the set of all non-expansive self-maps of the space $\overline{Y}$. Since any map $f \in \mathcal{F}$ is continuous, it is uniquely determined by its restriction to a dense subset of $\overline{Y}$. The separability of $Y$ implies that $|\mathcal{F}| \leq \mathfrak{c}$ and hence the family $\mathcal F$ can be written as $\{f_{\alpha} : \alpha \in \Omega_1\}$. 	
		
By transfinite induction, for every $\alpha < \mathfrak{c}$ we construct subsets $X_\alpha$, $V_\alpha$ of $K$ so that the following conditions are satisfied:
		\begin{enumerate}
			\item $X_{<\alpha}\defeq\bigcup_{\beta<\alpha}X_\beta\subseteq X_\alpha$, $V_{<\alpha}\defeq\bigcup_{\beta<\alpha}V_\beta \subseteq Z_\alpha$ and $X_\alpha \cap V_\alpha = \emptyset$;
			\item $|X_\alpha| \leq \w+\alpha$ and $|V_\alpha| \leq \w+\alpha;$
			\item if $\alpha \in \Omega_0 \setminus \{0\}$ then $X_\alpha \cap C_\alpha \not = \emptyset;$
			\item if $\alpha \in \Omega_1$ and the set $A_\alpha \defeq \{x \in K : f_\alpha(x) \not \in (X_{<\alpha} \cup \{x\})\}$ has cardinality $\mathfrak{c}$, then there exists a point $x \in X_\alpha$ such that $f_\alpha(x) \in V_\alpha.$
		\end{enumerate}
		
Put $X_0 = V_0 = \emptyset$ and suppose that for some ordinal $\alpha<\mathfrak c$, we have completed the inductive construction for all ordinals $\beta < \alpha$. The inductive conditions (1) and (2) ensure that the sets $X_{<\alpha}\defeq\bigcup_{\beta<\alpha}X_\beta$ and $V_{<\alpha}\defeq\bigcup_{\beta<\alpha}V_\beta$ have cardinality 
$|X_{<\alpha}| \leq \w+\alpha<\mathfrak{c}$ and $|V_{<\alpha}|\leq\w+\alpha< \mathfrak{c}$. Consider the following possible cases.
\smallskip

		Case 1: $\alpha \in \Omega_0$. Since $|C_\alpha| = \mathfrak{c}$, we can pick a point $x \in C_\alpha \setminus V_{<\alpha}$, and put $X_\alpha \defeq  V_{<\alpha} \cup \{x\}$ and $V_\alpha\defeq V_{<\alpha}$. We easily check that the sets $X_\alpha, V_\alpha$ are as required.
\smallskip
		
		Case 2: $\alpha \in \Omega_1$ and $\left| A_\alpha \right| < \mathfrak{c}$. Put $X_\alpha\defeq X_{<\alpha}$ and $V_\alpha \defeq V_{<\alpha}$. We easily check that the sets $X_\alpha, V_\alpha$ are as required.
\smallskip
		
		Case 3: $\alpha \in \Omega_1$ and $\left| A_\alpha \right| = \mathfrak{c}$. Since  $|X_{<\alpha}| < \mathfrak{c}$ and $|V_{<\alpha}| < \mathfrak{c}$, there exists  a point $x \in A_\alpha \setminus V_{<\alpha}$. Put $X_\alpha\defeq X_{<\alpha} \cup \{x\}$ and $V_\alpha\defeq V_{<\alpha} \cup \{f_\alpha(x)\}$. Since $X_{<\alpha} \cap V_{<\alpha} = \emptyset$ and $x \in A_\alpha \setminus V_{<\alpha}$, we get $X_\alpha \cap V_\alpha = \emptyset.$ By easy checks of the other conditions, we get that the sets $X_\alpha, V_\alpha$ are as required.
\smallskip
	
After completing the inductive construction, consider the set $X\defeq \bigcup_{\alpha < \mathfrak{c}}X_\alpha$. We claim, that $X$  is dense plastic subspace of $Y$, and moreover $X$ admits no non-identity non-expansive self-maps.
\smallskip		
		
\noindent\textit{The density of $X$ in $Y$.} Let $U$ be any non-empty open set in $Y$. Then for some $n \in \omega$ we have $K_n \subseteq B_n \cap K \subseteq U$, where $K_n$ is an uncountable compact set in $K$. By Proposition~\ref{p:Cantor-set}, $K_n$ contains a Cantor set. By property $(3)$, the set $X$ intersects every Cantor set in $K$, which yields that $Y \cap U \not = \emptyset$, hence $X$ is dense in $Y$. Then the completion $\overline Y$ of the metric space $Y$ is also the completion of the metric space $X$.
\smallskip
		
\noindent\textit{The plasticity of $X$.} Given any non-expansive bijection $f$ of $X$, we shall prove that $f$ is the identity isometry of $X$.  Since $f$ is non-expansive, it is uniformly continuous. By Theorem 4.3.17 in \cite{Engelking}, $f$ has a unique continuous extension $\bar{f}: \overline{Y} \to \overline{Y}$ to the completion $\overline Y$ of the metric space $X$.  Since $\bar f$ is non-expansive, it belongs to the family $\mathcal F$ and hence $\bar f=f_\alpha$ for some $\alpha\in \Omega_1$.	
	
Assuming that the set $A_\alpha\defeq \{x \in K : f_\alpha(x) \not \in (X_{<\alpha} \cup \{x\})\}$ has cardinality continuum, we can apply the inductive condition (4) and find a point $x \in X_\alpha \subseteq X$ such that $f_\alpha(x) \in V_\alpha$. Then $f_\alpha(x)\in X\cap Z$, which contradicts the inductive condition (1). This contradiction shows that $|A_\alpha|<\mathfrak c$.

Assume that for some $y \in X_{<\alpha}$, the set $f_\alpha^{-1}(y) \cap K$ is uncountable. Since $f_\alpha^{-1}(y) \cap K$ is a closed subspace of the $\sigma$-compact space $K$, it is $\sigma$-compact. By Proposition~\ref{p:Cantor-set}, it contains a Cantor set and hence continuum pairwise disjoint Cantor sets, each intersecting the set $X$, by the inductive condition (3). Then $| f_\alpha^{-1}(y) \cap X| = \mathfrak{c}$, which contradicts the bijectivity of the map $f:X \to X$. This contradiction shows that for every $y\in X_{<\alpha}$, the set $f_\alpha^{-1}(y) \cap K$ is at most countable.  From property $(1)$ we then get $| f_\alpha^{-1}[X_{<\alpha}]\cap K| \leq |X_{<\alpha}| \cdot \w\leq |\w+\alpha|  \cdot \w < \mathfrak{c}$. Then the set $B\defeq A_\alpha\cup (f^{-1}_\alpha[X_{<\alpha}]\cap K)$ has cardinality $|B|<\mathfrak c$ and hence $K\setminus B$ is dense in the $k$-crowded space $K$ (because every uncountable compact metrizable space contains a Cantor set and hence has cardinality $\mathfrak c$).
Observe that $f_\alpha(x)=x$ for all $x\in K\setminus B$. The density of the set $K\setminus B$  in $\overline Y$ ensures that the continuous map $f_\alpha$ is the identity map of $\overline Y$. Then $f=f_\alpha{\restriction}_X$ is the identity map of $X$, witnessing that the metric space $X$ is plastic.\hfill$\square$

\section{Metric and convex intervals in metric and normed spaces}\label{sec2a}

In this section prove some auxiliary results on metric and convex intervals in metric and normed spaces. These auxiliary resuls will be used in the proof of Theorem~\ref{t:main3}, presented in the next section.	
	
\begin{definition}
A metric space $(X, {\sf d})$ is called a \emph{metric interval} with endpoints  $a, b \in X$ (denoted by $([a, b], {\sf d})$) if $ {\sf d}(a, b) = \min\{{\sf d}(a, x) + {\sf d}(x, y) + {\sf d}(y, b), {\sf d}(a, y) + {\sf d}(y, x) + {\sf d}(x, b)\}$ for all $x,y\in X$.
\end{definition}

\begin{example}
Every  nonempty compact subset $K$ of the real line is a metric interval with endpoints $a= \min K$ and $b=\max K$.
\end{example}

\begin{lemma}\label{l:3}
Let $([a, b], {\sf d})$ be a metric interval and $(Y, {\sf d_Y})$ be a metric space. Every non-expansive map $f: [a,b] \to Y$ with ${\sf d}(a, b) = {\sf d_Y}(f(a), f(b))$ is an isometry.
\end{lemma}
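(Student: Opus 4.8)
The plan is to exploit the defining four-point inequality of a metric interval and squeeze the images under $f$ between two applications of the triangle inequality in $Y$. First I would record a degenerate case of the metric-interval identity: setting $y = x$ in the definition collapses both terms of the minimum, giving ${\sf d}(a,b) = {\sf d}(a,x) + {\sf d}(x,b)$ for every $x \in [a,b]$. Thus every point of $[a,b]$ lies metrically between the two endpoints.

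Next I would combine this with non-expansiveness and the triangle inequality in $Y$ to show that $f$ preserves the distances to both endpoints. For any $x \in [a,b]$, non-expansiveness yields ${\sf d}_Y(f(a),f(x)) \le {\sf d}(a,x)$ and ${\sf d}_Y(f(x),f(b)) \le {\sf d}(x,b)$, while the triangle inequality together with the hypothesis ${\sf d}(a,b) = {\sf d}_Y(f(a),f(b))$ gives ${\sf d}(a,b) \le {\sf d}_Y(f(a),f(x)) + {\sf d}_Y(f(x),f(b)) \le {\sf d}(a,x) + {\sf d}(x,b) = {\sf d}(a,b)$. Hence all of these inequalities are equalities, so ${\sf d}_Y(f(a),f(x)) = {\sf d}(a,x)$ and ${\sf d}_Y(f(x),f(b)) = {\sf d}(x,b)$ for every $x \in [a,b]$.

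Finally I would treat an arbitrary pair $x,y \in [a,b]$. The metric-interval definition supplies one of the two orderings, say ${\sf d}(a,b) = {\sf d}(a,x) + {\sf d}(x,y) + {\sf d}(y,b)$, the other case being symmetric under interchanging $x$ and $y$. Applying the four-point triangle inequality along $f(a) \to f(x) \to f(y) \to f(b)$ in $Y$, and substituting the endpoint identities from the previous step, I obtain ${\sf d}(a,b) = {\sf d}_Y(f(a),f(b)) \le {\sf d}(a,x) + {\sf d}_Y(f(x),f(y)) + {\sf d}(y,b)$. Subtracting and using the ordering identity forces ${\sf d}_Y(f(x),f(y)) \ge {\sf d}(x,y)$, which, combined with non-expansiveness, yields ${\sf d}_Y(f(x),f(y)) = {\sf d}(x,y)$. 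Since $x,y$ were arbitrary, $f$ is an isometry.

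I expect no substantial obstacle: the argument is a direct squeezing via two nested triangle inequalities. The only points requiring care are handling the symmetric second case of the metric-interval definition, and establishing the endpoint-distance preservation first so that it can be fed into the general four-point estimate.
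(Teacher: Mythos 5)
Your proof is correct and rests on the same core mechanism as the paper's: chaining the triangle inequality in $Y$ through $f(a),f(x),f(y),f(b)$ and comparing the result against the metric-interval identity for ${\sf d}(a,b)$. The only differences are cosmetic --- you argue directly by selecting the ordering that achieves the minimum, whereas the paper argues by contradiction using both orderings at once, and your intermediate endpoint-distance-preservation step, while valid, is not actually needed, since non-expansiveness alone already supplies the bounds ${\sf d}_Y(f(a),f(x))\le{\sf d}(a,x)$ and ${\sf d}_Y(f(y),f(b))\le{\sf d}(y,b)$ that you substitute in the final estimate.
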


\begin{proof}
Take arbitrary points $x, y \in [a,b]$ and assume that ${\sf d}(x, y) > {\sf d_Y}(f(x), f(y))$. Using the triangle inequality in the metric space $Y$ and the non-expansive property of the map $f$, we get the following inequality
$${\sf d_Y}(f(a), f(b)) \leq {\sf d_Y}(f(a), f(x)) + {\sf d_Y}(f(x), f(y)) + {\sf d_Y}(f(y), f(b)) < {\sf d}(a,x) + {\sf d}(x, y) + {\sf d}(y, b).$$
By the similar argument we can also obtain the inequality
$${\sf d_Y}(f(a), f(b)) \leq {\sf d_Y}(f(a), f(y)) + {\sf d_Y}(f(y), f(x)) + {\sf d_Y}(f(x), f(b)) < {\sf d}(a,y) + {\sf d}(y, x) + {\sf d}(x, b).$$
Together these results show
$${\sf d}(a, b) = {\sf d_Y}(f(a), f(b)) < \min\{{\sf d}(a, x) + {\sf d}(x, y) + {\sf d}(y, b), {\sf d}(a, y) + {\sf d}(y, x) + {\sf d}(x, b)\} = {\sf d}(a, b),$$
which is a contradiction that implies ${\sf d}(x, y) \leq {\sf d_Y}(f(x), f(y))$. Then, the map $f$ is non-contractive and non-expansive at the same time, so it is an isometry.
\end{proof}
	\begin{corollary}
		Let $([a,b],{\sf d})$ be a metric interval. The map ${\sf d}(a, \cdot): [a,b] \to \mathbb{R}$,  ${\sf d}(a, \cdot)\colon x \rightarrow {\sf d}(x, a) $ is an isometry.
	\end{corollary}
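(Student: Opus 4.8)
The plan is to apply Lemma~\ref{l:3} with the target space $Y = \mathbb{R}$ equipped with its usual metric and with $f = {\sf d}(a,\cdot)$, that is, $f(x) = {\sf d}(x,a)$. Since Lemma~\ref{l:3} already guarantees that a non-expansive map out of a metric interval matching the endpoint distance is automatically an isometry, the entire task reduces to verifying the two hypotheses of that lemma for this specific $f$.

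First I would check that $f$ is non-expansive. For arbitrary $x,y \in [a,b]$, the reverse triangle inequality in $(X,{\sf d})$ gives
$$|f(x) - f(y)| = |{\sf d}(x,a) - {\sf d}(y,a)| \le {\sf d}(x,y),$$
so $f$ maps $[a,b]$ into $\mathbb{R}$ non-expansively. Next I would verify the endpoint condition ${\sf d}(a,b) = |f(a) - f(b)|$. Computing directly, $f(a) = {\sf d}(a,a) = 0$ and $f(b) = {\sf d}(b,a) = {\sf d}(a,b)$, whence $|f(a) - f(b)| = {\sf d}(a,b)$, exactly as required.

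With both hypotheses confirmed, Lemma~\ref{l:3} applies and yields that $f = {\sf d}(a,\cdot)$ is an isometry from $([a,b],{\sf d})$ into $\mathbb{R}$. There is no serious obstacle here: the argument is a direct specialization of the preceding lemma, and the only computations needed are the reverse triangle inequality and the evaluation of $f$ at the two endpoints. The corollary can therefore be stated and closed in a few lines.
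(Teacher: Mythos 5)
Your proof is correct and is exactly the intended derivation: the paper states this as an immediate corollary of Lemma~\ref{l:3}, obtained by noting that ${\sf d}(a,\cdot)$ is non-expansive by the triangle inequality and sends the endpoints $a,b$ to $0$ and ${\sf d}(a,b)$. Nothing is missing.
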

	\begin{definition}
		A metric space $(X,{\sf d})$ is \emph{strictly convex} if for all points $x,y\in X$ and all positive real numbers $\alpha,\beta$ with $\alpha+\beta={\sf d}(x,y)$ there exists a unique point $z\in X$ such that ${\sf d}(x,z)=\alpha$ and ${\sf d}(z,y)=\beta$.
	\end{definition}
	
	\begin{lemma}\label{rem:1}
		Every convex subspace $C$ of a strictly convex normed space $X$ is strictly convex.
	\end{lemma}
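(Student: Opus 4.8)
The plan is to verify the two defining requirements of strict convexity---existence and uniqueness of the point dividing the distance---separately, obtaining existence from the convexity of $C$ and uniqueness from the strict convexity of the ambient space $X$. Throughout I regard $C$ as carrying the metric inherited from the norm of $X$, so that distances computed inside $C$ coincide with the corresponding distances in $X$.

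For existence, I would fix points $x,y\in C$ and positive reals $\alpha,\beta$ with $\alpha+\beta=\|x-y\|$, and take the convex combination
$$z\defeq\frac{\beta}{\alpha+\beta}\,x+\frac{\alpha}{\alpha+\beta}\,y.$$
Since $\tfrac{\alpha}{\alpha+\beta}\in(0,1)$ and $C$ is convex, the point $z$ lies in $C$. A direct computation using the homogeneity of the norm gives $\|x-z\|=\tfrac{\alpha}{\alpha+\beta}\|y-x\|=\alpha$ and $\|z-y\|=\tfrac{\beta}{\alpha+\beta}\|x-y\|=\beta$, so $z$ is a point of $C$ realizing the required distances to $x$ and $y$.

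For uniqueness, I would observe that any two points $z_1,z_2\in C$ satisfying $\|x-z_i\|=\alpha$ and $\|z_i-y\|=\beta$ are in particular points of the larger space $X$ with these same two distances. Because $X$ is strictly convex as a metric space, such a point is unique in $X$, which forces $z_1=z_2$. Hence the uniqueness required inside $C$ is simply inherited from the ambient space.

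I do not expect a genuine obstacle here, since the argument splits cleanly into ``existence is supplied by the convexity of $C$, uniqueness is supplied by the strict convexity of $X$.'' The only step deserving a moment of care is the verification that the metric on $C$ is the restriction of the norm metric of $X$, so that the candidate point produced by convexity is measured by the same distances that govern strict convexity in $X$; once this identification is made explicit, both parts follow immediately.
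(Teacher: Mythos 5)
Your proof is correct and follows essentially the same route as the paper: both arguments exhibit the convex combination $\frac{\beta}{\alpha+\beta}x+\frac{\alpha}{\alpha+\beta}y\in C$ as the witness for existence and derive uniqueness from the strict convexity of the ambient space $X$. The only difference is presentational (the paper states the unique point in $X$ first and then identifies it with the convex combination), so there is nothing to add.
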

	\begin{proof}
		Take arbitrary points $x,y\in C$ and positive real numbers $\alpha,\beta$ such that $\alpha+\beta=\|x-y\|$. By the strict convexity of $X$, there exists a unique point $z\in X$ such that $\|x-z\|=\alpha$ and $\|z-y\|=\beta$. On the other hand, the convex combination $c\defeq \frac{\beta}{\alpha+\beta}{\cdot}x+\frac{\alpha}{\alpha+\beta}{\cdot}y\in C$ also has $\|x-c \|=\alpha$ and  $\|c-y\| = \beta$. The uniqueness of $z$ implies $z=c\in C$, witnessing that the convex subspace $C$ of $X$ is strictly convex.
	\end{proof}
	
\begin{lemma}\label{l:4}
Let $([a,b], {\sf d})$ be a metric interval and $(Y, {\sf d_Y})$ be a strictly convex metric space. Then two isometric maps $f_1,f_2: [a,b] \to Y$ that satisfy $f_1(a) = f_2(a), f_1(b) = f_2(b)$ are identical on $[a, b]$ in the sense $f_1(x) = f_2(x) $ for all $x \in [a,b]$.
\end{lemma}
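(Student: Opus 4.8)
The plan is to reduce the claim to a single pointwise application of strict convexity in $Y$. Write $A\defeq f_1(a)=f_2(a)$ and $B\defeq f_1(b)=f_2(b)$. Since $f_1$ is an isometry, ${\sf d_Y}(A,B)={\sf d}(a,b)$. Fix an arbitrary point $x\in[a,b]$; the goal is to show $f_1(x)=f_2(x)$.

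First I would extract from the metric interval hypothesis the betweenness identity ${\sf d}(a,x)+{\sf d}(x,b)={\sf d}(a,b)$. This follows by specializing the defining equality of a metric interval to the pair $(x,y)=(x,x)$: since ${\sf d}(x,x)=0$, both expressions inside the minimum collapse to ${\sf d}(a,x)+{\sf d}(x,b)$, so this sum equals ${\sf d}(a,b)$.

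Next I would set $\alpha\defeq{\sf d}(a,x)$ and $\beta\defeq{\sf d}(x,b)$, so that $\alpha+\beta={\sf d}(a,b)={\sf d_Y}(A,B)$. Because $f_1$ and $f_2$ are isometries with $f_i(a)=A$ and $f_i(b)=B$, both points $f_1(x)$ and $f_2(x)$ satisfy ${\sf d_Y}(A,\cdot)=\alpha$ and ${\sf d_Y}(\cdot,B)=\beta$. If both $\alpha$ and $\beta$ are positive, strict convexity of $Y$ guarantees a \emph{unique} point $z\in Y$ realizing these two distances to $A$ and $B$; hence $f_1(x)=z=f_2(x)$.

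Finally I would dispose of the degenerate cases, which lie just outside the scope of strict convexity (it applies only to positive $\alpha,\beta$). If $\alpha=0$, then ${\sf d}(a,x)=0$ forces $x=a$, so $f_1(x)=A=f_2(x)$; symmetrically, $\beta=0$ forces $x=b$. I do not expect a genuine obstacle here: the argument is essentially a direct computation. The only points requiring care are the extraction of the betweenness identity from the metric-interval axiom and the separate treatment of the endpoints $a,b$, where the positivity hypothesis in the definition of strict convexity cannot be invoked.
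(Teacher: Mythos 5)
Your proof is correct and follows essentially the same route as the paper's: extract the betweenness identity ${\sf d}(a,x)+{\sf d}(x,b)={\sf d}(a,b)$ from the metric-interval axiom by specializing to $y=x$, then apply the uniqueness clause of strict convexity to the two candidate points $f_1(x)$ and $f_2(x)$. You are in fact slightly more careful than the paper, which silently skips the degenerate cases $x=a$ and $x=b$ where strict convexity (stated only for positive $\alpha,\beta$) cannot be invoked.
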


\begin{proof}
Take an arbitrary point $x \in [a, b]$ and put $\alpha\defeq {\sf d}(a,x)$ and $\beta\defeq {\sf d}(x,b)$. Observe that $${\sf d}(a,b) = \min\{{\sf d}(a,z) + {\sf d}(z,z) + {\sf d}(z,b), {\sf d}(a,z) + {\sf d}(z,z) + {\sf d}(z,b)\} = {\sf d}(a,z) + {\sf d}(z,b)=\alpha+\beta.$$
Consider the points $A\defeq f_1(a)=f_2(a)$ and $B\defeq f_1(b)=f_2(b)$ in $Y$. For each $i\in\{1,2\}$, we have ${\sf d}_Y(A,f_i(x))={\sf d}(a,x)=\alpha$ and ${\sf d}_Y(f_i(x),B)={\sf d}(x,b)=\beta$. Since ${\sf d}_Y(A,B)={\sf d}(a,b)=\alpha+\beta$, the strict convexity of $Y$ ensures that $f_1(x)=f_2(x)$.
\end{proof}
	
\begin{lemma}\label{l:5}
For any points $a,b\in X$ of a normed space $(X, \|\cdot\|)$, the convex interval $$[a, b] \defeq \{(1-t){ \cdot} a + t{\cdot} b: t \in [0, 1]\}\subset X$$ endowed with 
 the metric ${\sf d}(x, y) = \| x - y \|$, is a metric interval with endpoints $a, b$.
\end{lemma}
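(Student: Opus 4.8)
The plan is to verify the defining identity of a metric interval directly from the definition, using the geometry of the straight segment in a normed space. For $a,b\in X$ and the convex interval $[a,b]=\{(1-t)\cdot a+t\cdot b:t\in[0,1]\}$, I must show that for all $x,y\in[a,b]$,
$${\sf d}(a,b)=\min\{{\sf d}(a,x)+{\sf d}(x,y)+{\sf d}(y,b),\ {\sf d}(a,y)+{\sf d}(y,x)+{\sf d}(x,b)\}.$$
Write $x=(1-s)\cdot a+s\cdot b$ and $y=(1-t)\cdot a+t\cdot b$ with $s,t\in[0,1]$. The key observation is that any two points on the segment differ by a scalar multiple of $b-a$: for parameters $r\le r'$ one has $\|((1-r')a+r'b)-((1-r)a+rb)\|=(r'-r)\cdot\|b-a\|$. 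Thus every pairwise distance among $a,b,x,y$ is simply the gap between the corresponding parameters (with $a,b$ at parameters $0,1$) times the constant $\|b-a\|$.

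First I would record this scaling fact and factor out $\|b-a\|$, reducing the whole identity to the corresponding statement on the real parameters in $[0,1]$, where $a\mapsto 0$, $x\mapsto s$, $y\mapsto t$, $b\mapsto 1$. Then I would invoke the Example already stated in the excerpt: every nonempty compact subset of the real line is a metric interval with endpoints $\min$ and $\max$; applying it to the four-point set $\{0,s,t,1\}\subset\IR$ (whose min is $0$ and max is $1$) immediately gives the desired identity on the parameter side. Multiplying back through by $\|b-a\|$ yields the claim for $[a,b]$. Alternatively, and just as cleanly, I would argue directly: by symmetry assume $s\le t$, so that the ordering along the segment is $a,x,y,b$; then the first of the two bracketed sums telescopes to $(s-0)+(t-s)+(1-t)=1$ in parameters, i.e. to ${\sf d}(a,b)$, while the second sum only adds extra length by backtracking, hence is at least ${\sf d}(a,b)$, so the minimum equals ${\sf d}(a,b)$.

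The only mildly delicate point is confirming the collinearity-distance formula $\|((1-r')a+r'b)-((1-r)a+rb)\|=|r'-r|\cdot\|b-a\|$, which follows because the difference equals $(r-r')(a-b)$ and the norm is absolutely homogeneous; there is no genuine obstacle here, since the straight segment embeds isometrically (up to the scale $\|b-a\|$) into $\IR$. I expect the main obstacle to be purely bookkeeping: handling the case split $s\le t$ versus $t\le s$ so that the two bracketed expressions get matched to the two orderings correctly, which the symmetry of the $\min$ formula makes routine. The degenerate case $a=b$ is trivial since then all distances vanish, so I would dispatch it in one line and assume $\|b-a\|>0$ throughout the main argument.
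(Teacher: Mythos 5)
Your proposal is correct and follows essentially the same route as the paper: parametrize $x,y$ by $s,t\in[0,1]$, use absolute homogeneity of the norm to reduce all distances to parameter gaps scaled by $\|b-a\|$, and check the two orderings to see that the minimum of the two bracketed sums equals $\|b-a\|={\sf d}(a,b)$. The paper just writes the two sums in closed form as $\|b-a\|\cdot(1+\alpha-\beta+|\beta-\alpha|)$ and $\|b-a\|\cdot(1+\beta-\alpha+|\beta-\alpha|)$ rather than telescoping, but this is the same computation.
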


\begin{proof}
Take any $x, y$ from the convex interval $[a, b]$. Then for some $\alpha, \beta \in [0, 1]$ we have $x = (1-\alpha) \cdot a + \alpha \cdot b$ and $y = (1-\beta) \cdot a + \beta \cdot b$. Substituting it while calculating the following sum of distances will result in equality ${\sf d}(a,x) + {\sf d}(x,y) + {\sf d}(y, b) = \|b - a\| \cdot (1 + \alpha - \beta + |\beta - \alpha|)$. In the same way, we obtain ${\sf d}(a,y) + {\sf d}(y,x) + {\sf d}(x, b) = \|b - a\| \cdot (1 + \beta - \alpha + |\beta - \alpha|)$.
		Both when $\alpha \leq \beta$ or $\beta < \alpha$, these equalities result in
		$$\min\{{\sf d}(a,x) + {\sf d}(x,y) + {\sf d}(y, b), {\sf d}(a,y) + {\sf d}(y,x) + {\sf d}(x, b)\} = \|b-a\| = {\sf d}(a, b).$$
	\end{proof}
	


\section{Proof of Theorem~\ref{t:main3}}\label{sec3}

Given a strictly convex separable metric abelian group $(X,+,0,{\sf d})$, we shall construct a plastically rigid dense subgroup $H$ in $X$. For a subset $A\subseteq X$ we denote by $\langle A\rangle$ the group hull of $A$, i.e., the smallest subgroup of $X$ that containes the set $A$. For two subsets $A,B$ of the group $X$, put $A+B\defeq\{a+b:a\in A,\;b\in B\}$ and $A-B\defeq\{a-b:a\in A,\;b\in B\}$.

By \cite{BM}, the strictly convex metric abelian group $X$ admits a unique binary operation $\cdot:\IR\times X\to X$, $\cdot:(t,x)\mapsto tx$, that turns $X$ into a normed space, endowed with the norm $\|x\|={\sf d}(0,x)$.
So, we shall think of $X$ as a strictly convex normed space. If $X=\{0\}$, then the trivial subgroup $H=\{0\}$ is plastically rigid. So, assume that the normed space $X$ contains more than one point.
\smallskip

\noindent\textit{Inductive construction of the subgroup $H$}. Fix a countable set set $Q = \{x_i\}_{i \in \omega}$ in the separable normed space $X$, and consider its linear hull $$K := \operatorname{span}(Q) = \bigcup_{n \in \omega}\bigcup_{m\in\w} \big\{\textstyle{\sum_{i\in n}\alpha_i x_i}:(\alpha_i)_{i\in n}\in[-m,m]^n\big\}$$in $X$. By Lemma~\ref{rem:1}, $K$ is a strictly convex dense linear subspace of the strictly convex normed space $X$. 
Since all sets $\big\{\sum_{i\in n}\alpha_i x_i:(\alpha_i)_{i\in n}\in[-m,m]^n\big\}$, $n,m\in\w$, are compact, the space $K$ is $\sigma$-compact. The plastically rigid dense subgroup $H$ of the group $X$ will be constructed as a $k$-dense subgroup of the $\sigma$-compact linear space $K$. The $k$-density of $H$ means that it intersects every uncountable compact subset of $K$.

Let $\overline X$ be the completion of the normed space $X$. By the density of $K$ in $X$, the Banach space $\overline X$ is also a completion of the $\sigma$-compact normed space $K$. 
By the separability of $\overline X$, the set $\mathcal{F}$ of all non-expansive self-maps of the $\overline X$ has cardinality of continuum. By the same reason, the family $\mathcal C$ of all Cantor sets in $K$ also has cardinality of continuum.

Write the cardinal $\mathfrak c=[0,\mathfrak c)$ as the union of two disjoint sets $\Omega_0$ and $\Omega_1$ of cardinality $|\Omega_0|=|\Omega_1|=\mathfrak c$ such that $0\in\Omega_0$. Since $\mathcal F$ and $\mathcal C$ are sets of cardinality continuum, there exist their enumerations $\{C_\alpha:\alpha\in\Omega_0\}=\mathcal C$ and $\{f_\alpha:\alpha\in\Omega_1\}=\mathcal F$ such that $0\in C_0$.  

For every $\alpha\in \Omega_1$, the preimage $f^{-1}[K]$ of the $\sigma$-compact set $K$ in $\overline X$ can be written as the union of countably many closed sets in $\overline X$. Then its intersection $K_\alpha\defeq f^{-1}[K]\cap K$ with the $\sigma$-compact set $K$ remains $\sigma$-compact.

By transfinite induction, for every $\alpha < \mathfrak{c}$ we shall construct a subgroup $H_\alpha$ and a subset $V_\alpha$ of $K$ so that the following conditions are satisfied:
		\begin{enumerate}
			\item $H_{<\alpha}\defeq\bigcup_{\beta<\alpha}H_\beta \subseteq H_\alpha$, $V_{<\alpha}\defeq\bigcup_{\beta<\alpha}V_\beta \subseteq V_\alpha$, and $H_\alpha \cap V_\alpha = \emptyset$;
			\item $\left| H_\alpha \right| \leq\w+\alpha, \left| V_\alpha \right| \leq \w+\alpha;$
			\item if $\alpha \in \Omega_0$ then $H_\alpha \cap C_\alpha \not = \emptyset;$
			\item if $\alpha \in \Omega_1$ and the set $A_\alpha = \{x \in K_\alpha : f_\alpha(x) \not \in \langle H_{<\alpha} \cup \{x_\alpha\}\rangle \}$ has cardinality $\mathfrak{c}$, then there exists a point $x_\alpha \in H_\alpha$ such that $f_\alpha(x_\alpha) \in V_\alpha$.
				
		\end{enumerate}
		Put $H_0 = \{0\}, V_0 = \emptyset$ and suppose that for some ordinal $\alpha<\mathfrak c$, we completed the inductive construction for all ordinals $\beta < \alpha$. By the inductive continuous (1) and (2), the set $H_{<\alpha}\defeq\bigcup_{\beta<\alpha}H_\beta$ and $V_{<\alpha}\defeq\bigcup_{\beta<\alpha}V_\beta$ have cardinality $|H_{<\alpha}|\leq\w+\alpha<\mathfrak{c}$ and $|V_{<\alpha}|\leq \w+\alpha< \mathfrak{c}$. Consider the following possible cases.
\smallskip	

Case 1: $\alpha \in \Omega_0$. Since $K$ is a linear space, for each $n\in\IZ\setminus\{0\}$, the set $$\tfrac{1}{n}(H_{<\alpha} + V_{<\alpha}) = \{x \in K: (\exists y \in H_{<\alpha})(\exists v \in V_{<\alpha}): n \cdot x = y + v\}$$ has cardinality at most $|H_{<\alpha}|\cdot|V_{<\alpha}|\le|\w+\alpha|<\mathfrak{c}$. Since $\left| C_\alpha \right| = \mathfrak{c}$, we can pick a point $x_\alpha \in C_\alpha \setminus (\bigcup_{n \in \mathbb{Z} \setminus \{0\}}\frac{1}{n}(V_{<\alpha} + H_{<\alpha})).$ It is clear that the subgroup $H_\alpha \defeq\langle H_{<\alpha} \cup \{x_\alpha\} \rangle$ and the subset $V_\alpha\defeq V_{<\alpha}$ of $K$ satisfy the inductive conditions (2)--(4). It remains to show that  $H_\alpha\cap V_\alpha=\varnothing$. In the opposite case, we can find a point $v \in H_\alpha \cap V_\alpha\subseteq H_\alpha\defeq\langle H_{<\alpha} \cup \{x\} \rangle = \bigcup_{n \in \mathbb{Z}}\left (H_{<\alpha} + n \cdot x \right)$, which can be written as $v=h+n{\cdot}x_\alpha$ for some  $h\in H_{<\alpha}$ and $n\in\IZ$. Since the sets $H_{<\alpha}\ni h$ and $V_{<\alpha}=V_\alpha\ni v$ are disjoint, the number $n$ is not zero. Then $x_\alpha\in \frac1n(v-h)$, which contradicts the choice of the point $x_\alpha$. This contradiction shows that the sets $H_\alpha$ and $V_\alpha$ are disjoint, and hence satisfy the inductive conditions (1)--(4).
\smallskip
		
Case 2: $\alpha \in \Omega_1$ and $\left| A_\alpha \right| < \mathfrak{c}$. In this case, put  $H_\alpha\defeq H_{<\alpha}$ and $V_\alpha\defeq V_{<\alpha}$, and observe that so defined sets $H_\alpha$ and $V_\alpha$ satisfy the inductive conditions (1)--(4).
\smallskip
		
Case 3: $\alpha \in \Omega_1$ and $| A_\alpha| = \mathfrak{c}$. In this case we can pick a point $x_\alpha \in A_\alpha$ that does not belong to the set $H_{<\alpha} \cup \bigcup_{n \in \mathbb{Z} \setminus \{0\}}\frac{1}{n}(H_{<\alpha} + V_{<\alpha})$ (that has cadinality $<\mathfrak c=|A_\alpha|$). Consider the subgroup $H_\alpha\defeq \langle H_{<\alpha} \cup \{x_\alpha\} \rangle$ and the subset $V_\alpha \defeq V_{<\alpha} \cup \{f_\alpha(x_\alpha)\}$ of $K$. Observe, that $x_\alpha \in K_\alpha$, which implies that $f_\alpha(x_\alpha) \in K$ and hence $V_\alpha$ and $H_\alpha$ are subsets of the linear space $K$. By the similar argument to the one in the Case 1, we check that the group $H_\alpha$ and the set $V_\alpha$ are as required.
\smallskip

After completing the inductive construction, consider the subgroup $H \defeq \bigcup_{\alpha < \mathfrak{c}}H_\alpha$ of $K$. We claim that the set $H$ is dense in the groups $K$ and $X$. For any nonempty open set $U\subseteq X$, by the density of $K$ in $X$, there exists a point $x \in U\cap K\setminus\{0\}$. The set $L = \{t\cdot x : t\in \mathbb{R}\}$ is a topological copy of the real line and hence contains a Cantor set $C\subseteq L\cap U\subseteq K$. Since $H$ intersects all Cantor sets in $K$ by construction, we get $\emptyset\ne H \cap C\subseteq H\cap U$, witnessing that the subgroup $H$ is dense in $K$ and $X$. It remains that the metric group $H$ is plastically rigid (and hence is a plastic metric space).
\smallskip
		
\noindent
\textit{The plastic rigidity of $H$.} Given any non-expansive bijection $f$ of $H$, we need to show that it is of the form $f(x)=a+s\cdot x$ for some $a\in H$ and $s\in\{-1,1\}$. By the density of $H$ in $X$, the completion $\overline X$ of $X$ is also a completion of $H$. By the uniform continuity, the non-expansive map $f$ admits a unique continuous extension $\bar f:\overline X\to\overline X$, see  Theorem 4.3.17 in \cite{Engelking}. It is easy to see that $\bar f$ is non-expansive and hence $\bar f=f_\alpha$ for some $\alpha\in\Omega_1$. Note that $f_\alpha[H]=f[H]=H\subseteq K$ and hence $H \subseteq f_\alpha^{-1}(H) \subseteq f_\alpha^{-1}(K)$, which implies that $H \subseteq K_\alpha$. Then the restriction $h_\alpha\defeq f_\alpha{\restriction_{K_\alpha}}:K_\alpha\to K$ is also an extension of the map $f$.
		
We claim that the set $A_\alpha\defeq \{x \in K_\alpha : f_\alpha(x) \not \in \langle H_{<\alpha} \cup \{x\}\rangle \}$ has cardinality $| A_\alpha | < \mathfrak{c}$. In the opposite case, by property $(4)$ there exists a point $x_\alpha \in H_{\alpha} \subseteq H$ such that $f(x_\alpha)=f_{\alpha}(x_\alpha) \in V_{\alpha}\subseteq K\setminus H$, which contradicts the choice of the bijection $f:H\to H$. This contradiction shows that $| A_\alpha | < \mathfrak{c}$.
		
Next, we show that for every $y \in H_{<\alpha}$, the preimage $h_{\alpha}^{-1}(y)$ is at most countable. Since $h_{\alpha}^{-1}(y)$ is a closed subspace of the $\sigma$-compact space $K_\alpha\defeq K\cap f_\alpha^{-1}[K]$, it is $\sigma$-compact. Hence by Proposition~\ref{p:Cantor-set}, it contains two disjoint Cantor sets. Then from property $(3)$ we conclude that $\left | h_{\alpha}^{-1}(y) \cap H \right |\ge 2$. Since $h_{\alpha}$ extends a bijective map $f: H \to H$, it is a contradiction showing that $h_{\alpha}^{-1}(y)$ is at most countable for all $y \in H_{<\alpha}.$ 

From property $(2)$ we then get $\left | h_{\alpha}^{-1}[H_{<\alpha}] \right | \leq \left | H_{<\alpha} \right | \cdot \w \leq \left | \alpha + \omega \right | \cdot \w < \mathfrak{c}$. 	Observe also that for all $n \in \mathbb{Z} \setminus \{0, 1, -1\}$ and $h \in H_{<\alpha}$ the set $\{x \in K_\alpha : h_{\alpha}(x) = h+n \cdot x\}$ contains at most one element. Indeed, otherwise, for two distinct points $x,y$ from this set the following contradiction to the fact that $h_{\alpha}$ is non-expansive would arise
		$${\sf d}(h_{\alpha}(x), h_{\alpha}(y)) = {\sf d}(h+n \cdot x, h+n \cdot y) = {\sf d}(n \cdot x, n \cdot y) = |n| \cdot {\sf d}(x,y) > {\sf d}(x,y).$$ Hence, for every $n \in \mathbb{Z} \setminus \{0, 1, -1\}$ the cardinality of $\{x \in K_\alpha : h_{\alpha}(x) \in n \cdot x + H_{<\alpha}\}$ does not exceed $|H_{<\alpha}|<\mathfrak{c}$.
		
Consider the set $B\defeq\{x\in K_\alpha:h_\alpha(x)\notin \{-x,x\}+H_{<\alpha}\}$ and observe that $$B\subseteq A_\alpha\cup h_\alpha^{-1}[H_{<\alpha}]\cup\bigcup_{n\in\IZ\setminus\{-1,0,1\}}\{x\in K_\alpha:h_\alpha(x)\in H_{<\alpha}+n\cdot x\}$$and hence $|B|\le |A_\alpha|+|h_{\alpha}^{-1}[H_{<\alpha}]|+|H_{<\alpha}|\cdot\w<\mathfrak c$.
\smallskip
		
The plastic rigidity of the group $H$ will follow as soon as we find a point $a\in H_{<\alpha}$  and sign $s\in\{-1,+1\}$ such that  the set $$F_a^s\defeq \{x \in K : f_\alpha(x) = a+s{\cdot}x\}$$coincides with $K$. To find such $a$ and $s$, we first establish some basic properties of the sets $F_a^s$ for arbitrary $a\in H_{<\alpha}$ and $s\in\{-1,+1\}$. 

\begin{claim}\label{cl:1}  For all $a\in H_{<\alpha}$ and $s\in\{-1,+1\}$, the set $F_a^s$ is a closed convex subset of the normed space $K$, and $F_a^s\subseteq K_\alpha$.
\end{claim}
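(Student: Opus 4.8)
The plan is to dispatch the two set-theoretically cheap assertions first and then spend the real effort on convexity. For the inclusion $F_a^s\subseteq K_\alpha$, I would simply unwind the definitions: if $x\in F_a^s$ then $f_\alpha(x)=a+s{\cdot}x$, and since $a\in H_{<\alpha}\subseteq K$, $x\in K$, and $K$ is a linear subspace, the point $a+s{\cdot}x$ again lies in $K$; hence $f_\alpha(x)\in K$, so $x\in f_\alpha^{-1}[K]\cap K=K_\alpha$. For closedness, I would observe that $F_a^s$ is exactly the zero set of the map $K\to\overline X$, $x\mapsto f_\alpha(x)-a-s{\cdot}x$, which is continuous because $f_\alpha$ is non-expansive and $x\mapsto a+s{\cdot}x$ is affine; the preimage of the closed set $\{0\}$ is closed in $K$.

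The substance is convexity. I would take arbitrary $x,y\in F_a^s$ and a point $z$ of the convex interval $[x,y]\subseteq K$, and aim to show $f_\alpha(z)=a+s{\cdot}z$. By Lemma~\ref{l:5} the convex interval $[x,y]$ with the norm metric is a metric interval with endpoints $x,y$. Since $f_\alpha$ is non-expansive and $\|f_\alpha(x)-f_\alpha(y)\|=\|(a+s{\cdot}x)-(a+s{\cdot}y)\|=\|x-y\|={\sf d}(x,y)$, Lemma~\ref{l:3} shows that $f_\alpha{\restriction}_{[x,y]}$ is an isometry. The affine map $\psi\colon[x,y]\to K$, $\psi(z)\defeq a+s{\cdot}z$, is likewise an isometry of the metric interval $[x,y]$, and the two isometries agree on the endpoints: $f_\alpha(x)=a+s{\cdot}x=\psi(x)$ and $f_\alpha(y)=a+s{\cdot}y=\psi(y)$. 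Lemma~\ref{l:4} then forces $f_\alpha(z)=\psi(z)=a+s{\cdot}z$ for every $z\in[x,y]$, whence $z\in F_a^s$ and $F_a^s$ is convex.

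The hard part is the hypothesis of Lemma~\ref{l:4}: it requires the common target of $f_\alpha{\restriction}_{[x,y]}$ and $\psi$ to be \emph{strictly convex}. The image of $\psi$ lies in the strictly convex space $K$ (Lemma~\ref{rem:1}), but a priori $f_\alpha$ sends the interior of $[x,y]$ into $\overline X$, and strict convexity of $X$ need not be inherited by its completion $\overline X$; equivalently, a metric segment in $\overline X$ between the endpoints $p\defeq a+s{\cdot}x$ and $q\defeq a+s{\cdot}y$ of $K$ need not coincide with the affine segment $[p,q]$. So the crux is to run the uniqueness argument inside a strictly convex space. I would attack this by confining the isometric image $f_\alpha[[x,y]]$ to $K$ itself — i.e. by arguing that $[x,y]\subseteq K_\alpha$, so that $f_\alpha$ agrees on $[x,y]$ with the map $h_\alpha=f_\alpha{\restriction}_{K_\alpha}\colon K_\alpha\to K$ and one may apply Lemma~\ref{l:4} with the strictly convex target $K$. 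Securing this confinement (or, alternatively, establishing strict convexity of the ambient complete space in which the endpoints $p,q\in K$ live) is where I expect the main difficulty to lie; once it is in place, the convexity conclusion follows immediately from the chain Lemma~\ref{l:5}, Lemma~\ref{l:3}, Lemma~\ref{l:4}.
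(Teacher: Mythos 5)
Your treatment of the inclusion $F_a^s\subseteq K_\alpha$ and of closedness matches the paper's, and you have correctly located the crux of the convexity argument: Lemma~\ref{l:4} needs a strictly convex target, while a priori $f_\alpha$ maps the interior of $[x,y]$ only into $\overline X$, whose strict convexity is not guaranteed. But you leave that crux unresolved: you say you would ``argue that $[x,y]\subseteq K_\alpha$'' or else establish strict convexity of the completion, and explicitly defer the difficulty. Neither route is how the paper closes the gap, and the first, taken literally as a step to be established \emph{before} the lemmas are applied, is not available --- there is no direct way to see that $f_\alpha$ sends every point of the segment $[x,y]$ into $K$. (The second fallback is also unsafe, since strict convexity need not pass to completions.)

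The paper's resolution is to work with the intersection $[x,y]_\alpha\defeq[x,y]\cap K_\alpha$ rather than with $[x,y]$ itself. Since $x,y\in F_a^s\subseteq K_\alpha$, the set $[x,y]_\alpha$ contains both endpoints, and any subset of a metric interval containing its endpoints is again a metric interval with the same endpoints; moreover $f_\alpha$ maps $[x,y]_\alpha$ into $K$ by the very definition of $K_\alpha$, so Lemmas~\ref{l:3} and \ref{l:4} apply with the strictly convex target $K$ and yield $[x,y]_\alpha\subseteq F_a^s$. The upgrade from $[x,y]_\alpha$ to $[x,y]$ then uses an ingredient you did not invoke: every nonempty open subset of $[x,y]$ contains a Cantor set and hence meets the group $H\subseteq K_\alpha$ (which by construction intersects every Cantor set in $K$), so $[x,y]_\alpha$ is dense in $[x,y]$; since $F_a^s$ is closed, it contains $\overline{[x,y]_\alpha}=[x,y]$. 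Without this restrict-then-densify step (or some substitute for it), your argument does not go through.
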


\begin{proof} Taking into account that $a \in H_{<\alpha} \subseteq H \subseteq K$ and $K$ is a group, we conclude that $f_\alpha[F_a^s]=a+s{\cdot}F_a^s \subseteq K$ and hence $F_a^s\subseteq K_\alpha$. The continuity of the function $f_\alpha$ implies that the set $F_a^s$ is closed in $K$. It remains to prove that the set $F^s_a$ is convex in the linear space $K$.

Given any distinct points $x,y\in F_a^s$, we need to check that the convex interval $[x,y]\defeq\{tx+(1-t)y:t\in[0,1]\}$ in the linear space $K$ is contained in the set $F_a^s$. By Lemma $\ref{l:5}$ the convex interval $[x,y]$ is a metric interval with endpoints $x,y \in F_a^{s} \subseteq K_\alpha$. Then the set $[x,y]_\alpha\defeq [x,y] \cap K_\alpha$ is also a metric interval with endpoints $x,y $. Moreover, we have ${\sf d}(f_{\alpha}(x), f_{\alpha}(y)) = {\sf d}(s\cdot x + a, s\cdot y + a) =\|x-y\|={\sf d}(x,y).$ Hence, by Lemma $\ref{l:3}$ the map $f_{\alpha}{\restriction}_{[x,y]_\alpha}: [x,y]_\alpha \to K$ is then an isometry. On the other hand, the map $g: [x,y]_\alpha \to K$, $g:z\mapsto a+s{\cdot}z$, also is an isometry with $g(x) =a+s{\cdot}x=f_{\alpha}(x)$ and $g(y) =a+s{\cdot}y= f_{\alpha}(y)$. Since $K$ is strictly convex, Lemma \ref{l:4} ensures that $f_{\alpha}(z) = g(z) = a+s{\cdot}z$ for all $z \in [x,y]_\alpha$. This implies $[x,y]_\alpha \subseteq F_{a}^{s}$. Since the convex interval $[x,y]$ is a topological copy of the interval $[0,1] \subseteq \mathbb{R}$, every open non-empty subset of $[x,y] \subseteq K$ contains a Cantor set and hence intersects $H \subseteq K_\alpha$. Then, the set $[x,y]_\alpha=[x,y] \cap K_\alpha \subseteq F_a^s$ is dense in $[x,y]$. Since $F_a^{s}$ is closed in $K$, we get that the closure $\overline{[x,y]_\alpha} = [x,y]$ is contained in $F_a^{s}$, witnessing that the set $F_{a}^{s}$ is convex. 
\end{proof}

The definition of the sets $F_a^s$ implies the following claims.

\begin{claim}\label{cl:2} For any distinct points $a,b\in H_{<\alpha}$ and any sign $s\in\{-1,+1\}$, the sets $F_a^s$ and $F_b^s$ are disjoint.
\end{claim}

\begin{claim}\label{cl:2a} For any points $a,b\in H_{<\alpha}$, we have $F_a^{-1}\cap F_b^{+1}\subseteq\{\tfrac{a-b}2\}$.
\end{claim}

By a {\em line} in the normed space $K$, we understand the affine hull $\{(1-t){\cdot}x+t{\cdot}y:t\in\IR\}$ of any distinct points $x,y\in K$. It is easy to see that each line in $K$ is an isometric copy of the real line.

\begin{claim}\label{cl:3}  Every  line $L$ in the linear space $K$ is contained in the set $F_a^s$ for some point  $a\in H_{<\alpha}$ and sign $s\in\{-1,+1\}$.
\end{claim}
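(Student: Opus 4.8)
The plan is to transfer the whole problem onto the line $L\cong\IR$ and to run a continuity argument whose only essential ingredients are the ``almost disjointness'' of the sets $F_a^s$ recorded in Claims~\ref{cl:2} and \ref{cl:2a}, the $k$-density of $H$, and---crucially---the bijectivity of $f$ together with the fact that $H$ is a \emph{subgroup}. First I would fix a point $p_0\in H\cap L$ (which exists since $H$ meets every Cantor set and $L$ contains Cantor sets) and record that $H\cap L=p_0+(H\cap L_0)$, where $L_0$ is the linear line through the origin parallel to $L$; thus $H\cap L$ is a coset of the subgroup $H\cap L_0$ of the line $L_0\cong\IR$. Writing $D\defeq(H\cap L)\setminus B$, where $B$ is the set of cardinality $<\mathfrak c$ constructed above with $K_\alpha\setminus B\subseteq\bigcup_{a,s}F_a^s$, and using $H\subseteq K_\alpha$, the $k$-density of $H$ gives that $D$ meets every Cantor subset of $L$ in a set of cardinality $\mathfrak c$; in particular $D$ is dense in $L$ and $D\subseteq\bigcup_{a\in H_{<\alpha},\,s\in\{-1,1\}}F_a^s$. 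By Claim~\ref{cl:1} each $F_a^s\cap L$ is a closed convex subset of $L$, i.e.\ a closed subinterval of $L\cong\IR$, and by Claims~\ref{cl:2} and \ref{cl:2a} these intervals have pairwise disjoint interiors.

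Next I would isolate the ``fat'' pieces. Let $U$ be the union of the interiors (in $L$) of those $F_a^s\cap L$ that are nondegenerate. Since there are at most $|H_{<\alpha}|\cdot2<\mathfrak c$ sets $F_a^s$, a pigeonhole argument shows that $L\setminus U$ contains no Cantor set: if it did, then the $\mathfrak c$-many points of $D$ inside that Cantor set would be distributed among fewer than $\mathfrak c$ sets $F_a^s$, so some $F_a^s$ would meet $L\setminus U$ in $\mathfrak c$ points and hence $F_a^s\cap L$ would be a nondegenerate interval whose interior lies in $U$---impossible. As $L\setminus U$ is a closed subset of $L\cong\IR$ with no nonempty perfect subset, the Cantor--Bendixson theorem makes it countable, so $U$ is a dense open subset of $L$ with countable complement.

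The heart of the proof is then a continuity argument. On each connected component (an open interval) of $U$ the map $\bar f=f_\alpha$ has the form $x\mapsto a+s\cdot x$ for one fixed pair $(a,s)$, so $(a,s)$ is locally constant on $U$. I would show it is \emph{globally} constant by checking that it cannot change across $L\setminus U$. For two components with the \emph{same} sign $s$ but values $a\ne a'$ abutting a common boundary point $z$, continuity of $\bar f$ at $z$ forces $a+z=a'+z$, i.e.\ $a=a'$, a contradiction. A \emph{sign change} between a ``$+$''-component and a ``$-$''-component meeting at a point $z$ forces, again by continuity, $a+z=a'-z$, whence $z=\tfrac{a'-a}2$; this is exactly the point allowed by Claim~\ref{cl:2a}. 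Here the subgroup structure enters: since $a,a'\in H_{<\alpha}\subseteq H$ we have $a'-a=2z\in H$, so the reflection $\rho_z\colon x\mapsto 2z-x=(a'-a)-x$ maps $H\cap L$ into itself (because $H$ is a subgroup) and maps $L$ onto $L$. For a point $x\in H\cap L$ lying in the ``$+$''-component just on one side of $z$, its reflection $\rho_z(x)\in H\cap L$ lands in the ``$-$''-component on the other side, and a one-line computation gives $f_\alpha(\rho_z(x))=a'-(2z-x)=a+x=f_\alpha(x)$. Thus two distinct points of $H$ have the same image under $f$, contradicting the bijectivity of $f\colon H\to H$. Consequently the sign is constant on $U$, and then the preceding same-sign remark makes $a$ constant on $U$ as well.

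Finally, with the single pair $(a,s)$ in hand, $F_a^s\cap L\supseteq U$; since $F_a^s\cap L$ is closed in $L$ and $U$ is dense, it contains $\overline U=L$, i.e.\ $L\subseteq F_a^s$, as required. I expect the sign-change step to be the main obstacle, for two reasons. Conceptually, this is the only place where the hypotheses that $f$ is a \emph{bijection} and that $H$ is a \emph{group} are indispensable: a generic non-expansive self-map of $\overline X$ may well ``fold'' the line $L$, sending its two halves isometrically onto a single ray, and it is precisely the relation $a'-a\in H$ that converts such a fold into a genuine collision of two points of $H$. Technically, the delicate part is the descriptive-set-theoretic bookkeeping of the countable complement $L\setminus U$: I must actually produce a point $z$ at which a ``$+$''-component and a ``$-$''-component abut so that $\rho_z(x)$ falls into the opposite component, which requires an induction on the Cantor--Bendixson rank of $L\setminus U$ to pass from mere accumulation points to genuinely adjacent (touching) components.
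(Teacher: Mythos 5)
Your proposal uses the same essential ingredients as the paper's proof: the traces $F_a^s\cap L$ are closed convex (hence interval) subsets of $L$ with pairwise disjoint interiors; the part of $L$ not covered by these interiors is a closed set of cardinality $<\mathfrak c$ and hence (having no Cantor subset) countable; a same-sign change of $a$ across a common boundary point is killed by continuity via Claim~\ref{cl:2}; and a sign change is killed by exactly the paper's reflection argument, using Claim~\ref{cl:2a} to get $2z=a'-a\in H$ and the group structure plus bijectivity of $f$ to produce a collision $f_\alpha(x)=f_\alpha(2z-x)$ of two points of $H$. So the core of your argument coincides with the paper's.

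The one place where your write-up is not yet a proof is the step you yourself flag: propagating the locally constant pair $(a,s)$ across the countable closed set $L\setminus U$, for which you propose an induction on Cantor--Bendixson rank to reduce accumulation points to genuinely abutting components. That induction is unnecessary, and the paper closes the gap with a much shorter device: if the closed countable set $F\defeq L\setminus U$ were nonempty, it would have an isolated point $u$ (a nonempty countable Polish space cannot be perfect, equivalently apply Baire to $F$); a punctured ball $B(u,\e)\setminus\{u\}=U^-\cup U^+$ then lies in $U$, and by connectedness each of $U^\pm$ sits inside a single $I_a^s$, so at $u$ two components \emph{do} genuinely abut. Your two abutting-point arguments then show $u$ itself lies in the interior of some $C_a^s$, i.e.\ $u\in U$ --- a contradiction proving $F=\emptyset$, after which connectedness of $L$ gives a single $(a,s)$ at once. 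You should replace the rank induction by this isolated-point argument. One further small correction: in your pigeonhole step, ``some $F_a^s$ meets $L\setminus U$ in $\mathfrak c$ points'' does not follow from distributing $\mathfrak c$ points among $<\mathfrak c$ sets (this can fail for singular $\mathfrak c$); what you actually need, and what does follow, is that some $F_a^s$ meets $L\setminus U$ in at least three points (otherwise the union has size at most $4\,|H_{<\alpha}|<\mathfrak c$), which already contradicts $F_a^s\cap(L\setminus U)\subseteq\partial(F_a^s\cap L)$ having at most two elements.
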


\begin{proof} By Claim~\ref{cl:1}, for all $a\in H_{<\alpha}$ and $s\in\{-1,+1\}$, the intersection $C_a^s\defeq L \cap F_a^s$ is a closed convex set in $L$.  Let $I_a^s$ be the interior of the set $C_a^s$ in $L$. Observe that the boundary $\partial C_a^s$ of the convex set $C_a^s$ in $L$ contains at most two points. Moreover, $|C_a^s|\le 1$ if and only if $I_a^s=\emptyset$. For every $s\in\{-1,+1\}$, consider the set $A^s\defeq \{a \in H_{<\alpha}:I_a^s\ne\emptyset\}$. By Claim~\ref{cl:2}, the indexed family $(I_a^s:a\in A^s)$ consists of pairwise disjoint nonempty open sets in the separable metric space $L$, which implies $|A^s|\le\w$.

Observe that the line $L$ is a topological copy of the real line $\IR$ and hence $L$ is a Polish space. Since the space $K_\alpha$ is $\sigma$-compact, the subspace $L\setminus K_\alpha$ is a Polish subspace of $L$ (being a $G_\delta$-set in $L$). By Proposition~\ref{p:Cantor-set}, each uncoutable Polish space contains a Cantor set. Since the group $H$ intersects all Cantor sets in $K$, the $G_\delta$-set $L\setminus K_\alpha$ is at most countable. Recall that the set $B=\{x\in K_\alpha:f_\alpha(x)\notin\{-x,x\}+H_{<\alpha}\}$ has cardinality $|B|<\mathfrak c$. Observe that $$L=(L\setminus K_\alpha)\cup (L\cap B)\cup\bigcup_{s\in \{-1,1\}}\bigcup_{a\in H_{<\alpha}}(I_a^s\cup\partial C^s_a)$$ and hence the closed subset
$F\defeq L\setminus\bigcup_{s\in\{-1,1\}}\bigcup_{a\in A^s}I_a^s$ of the line $L$ is contained in the set $(L\setminus K_\alpha)\cup(L\cap B)\cup\bigcup_{s\in\{-1,1\}}\bigcup_{a\in H_{<\alpha}}\!\partial C^s_a$, which has cardinality $<\mathfrak c$. By Proposition~\ref{p:Cantor-set}, the Polish space $F$ is at most countable.

We claim that the Polish space $F$ is empty. In the opposite case, the Baire Theorem ensures the existence of an isolated point $u$ in $F$. Then for some open ball $B(u,\e)$ in the line $L$ we have $B(u,\e)\cap F=\{u\}$. The complement $B(u,\e))\setminus\{u\}$ can be written as the disjoint union $U^-\cup U^+$ of two open connected sets $U^-$ and $U^+$ in the line $L$. Since $U^-\cup U^+\subseteq L\setminus F= \bigcup_{s\in\{-1,1\}}\bigcup_{a\in A^s}I^s_a$, the connectedness of $U^-,U_+$ ensures that $U^-\subseteq I^s_a\subseteq  F_a^{s}$ and $U^+\subseteq I^t_b\subseteq F_b^t$ for some signs $s,t\in \{-1,+1\}$ and points $a\in A^s$ and $b\in A^t$.

 We claim that $s\ne t$. To derive a contradiction, assume that $s=t$ and observe that $u\in \overline{U^+}\cap \overline{U^-}\subseteq F^s_a\cap F^s_b$, which implies $a=b$ and $u\in F^s_a$, according to Claim~\ref{cl:2}. Then $U^-\cup\{u\}\cup U^+$ is an open neighborhood of $u$ in $L\cap F^s_a$, which implies $u\in I^s_a$ and contradicts $u\in F$. This contradiction shows that $s\ne t$. In this case we lose no generality assuming that $s=-1$ and $t=+1$ (otherwise exchange the sets $U^-$ and $U^+$ by their places around the point $u$ in the line $L$). 

By Claim~\ref{cl:2a}, $u\in \overline{U^-}\cap\overline{U^+}\subseteq F^{-1}_a\cap F^{+1}_b=\{\frac{a-b}2\}$ and hence $2{\cdot}u=a-b\in H-H=H$. Since the group $H$ intersects every Cantor set in $L$, the intersection $H\cap L$ is dense in $L$. Then we can choose a point $x\in H\cap U^-\subseteq F^{-1}_a$ so close to $u$ that the point $y\defeq 2u-x\in H-H=H$ belongs to the interval $U^+\subseteq F^{+1}_b$. Then $x,y$ are two distinct points of the group $H$ such that $f_\alpha(y)=y+b=2{\cdot}u-x+b=(a-b)-x+b=-x+a=f_\alpha(x)$, which contradicts the bijectivity of the function $f=f_\alpha{\restriction}_H$. This contradiction shows that the set $F$ is empty and hence
$L\subseteq \bigcup_{s\in\{-1,+1\}}\bigcup_{a\in A^s}I_a^s$.  Now the connectedness of the line $L$ ensures that $L\subseteq I^s_a\subseteq F^s_a$ for some $s\in\{-1,+1\}$ and $a\in A^s\subseteq H_{<\alpha}$.
\end{proof}

By Claim~\ref{cl:3}, for every  line $\ell$ in the normed space $K$, there exist a sign $s\ell\in\{-1,+1\}$ and a point $a\ell\in H_{<\alpha}$ and such that $L\subseteq F_{a\ell}^{s\ell}$.

\begin{claim} \label{cl:4}For any intersecting lines $\ell_1,\ell_2$ in $K$, we have $s\ell_1=s\ell_2$ and $a\ell_1=a\ell_2$.
\end{claim}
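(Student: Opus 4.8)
The plan is to reduce everything to evaluating $f_\alpha$ at cleverly chosen points and comparing the two affine formulas that Claim~\ref{cl:3} attaches to the various lines. First I would dispose of the trivial case $\ell_1=\ell_2$, where the two lines carry the same assigned data by construction, and assume the lines are distinct, so that they meet in a single common point $p\in \ell_1\cap \ell_2$. Evaluating $f_\alpha$ at $p$ along each line gives $a\ell_1+s\ell_1{\cdot}p=f_\alpha(p)=a\ell_2+s\ell_2{\cdot}p$, hence $a\ell_1-a\ell_2=(s\ell_2-s\ell_1){\cdot}p$. In particular, once the signs are shown to agree this identity forces the base points to agree as well, so the whole claim reduces to proving $s\ell_1=s\ell_2$.

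To prove the equality of signs I would argue by contradiction, assuming $s\ell_1\ne s\ell_2$; after possibly renaming the lines we may take $s\ell_1=-1$ and $s\ell_2=+1$, whence the identity above becomes $p=\tfrac12(a\ell_1-a\ell_2)$, i.e.\ $a\ell_1-a\ell_2=2{\cdot}p$. The key construction is an auxiliary third line. Choose any points $x_1\in \ell_1\setminus\{p\}$ and $y_1\in \ell_2\setminus\{p\}$ (these exist since each line is an isometric copy of $\IR$), note that $x_1\ne y_1$ because $\ell_1\cap \ell_2=\{p\}$, and let $\ell_3$ be the line through $x_1$ and $y_1$. Applying Claim~\ref{cl:3} to $\ell_3$ yields a sign $s\ell_3\in\{-1,+1\}$ and a point $a\ell_3\in H_{<\alpha}$ with $\ell_3\subseteq F_{a\ell_3}^{s\ell_3}$.

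Now $x_1$ lies on both $\ell_1$ and $\ell_3$, and $y_1$ lies on both $\ell_2$ and $\ell_3$, so evaluating $f_\alpha$ at each of these points in two ways gives $a\ell_1-x_1=a\ell_3+s\ell_3{\cdot}x_1$ and $a\ell_2+y_1=a\ell_3+s\ell_3{\cdot}y_1$. Subtracting the second relation from the first and substituting $a\ell_1-a\ell_2=2{\cdot}p$ produces $2{\cdot}p-x_1-y_1=s\ell_3{\cdot}(x_1-y_1)$. If $s\ell_3=+1$ this collapses to $x_1=p$, and if $s\ell_3=-1$ it collapses to $y_1=p$; in either case we contradict the choice $x_1\ne p$ and $y_1\ne p$. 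Hence $s\ell_1=s\ell_2$, and then $a\ell_1=a\ell_2$ follows from the identity of the first paragraph.

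I expect the only genuinely creative step to be the choice of the auxiliary line $\ell_3$ through one off-$p$ point of each of $\ell_1$ and $\ell_2$: the difficulty lies not in any computation (everything reduces to affine algebra in the normed space $K$) but in recognising that such a line over-determines its own coefficient $s\ell_3$ and thereby forces one of the two chosen points to coincide with $p$. It is worth noting that, once the formulas of Claim~\ref{cl:3} are available, this argument uses neither strict convexity nor non-expansiveness directly, since those hypotheses have already been consumed in establishing Claim~\ref{cl:3}.
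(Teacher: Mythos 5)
Your proof is correct and follows essentially the same route as the paper: both arguments reduce to the case of distinct signs and derive a contradiction from the auxiliary line through one off-intersection point of each of $\ell_1,\ell_2$, which is forced to share its sign with one of them. The only (cosmetic) difference is that you extract the contradiction by direct affine algebra with the formulas $f_\alpha(x)=a\ell+s\ell\cdot x$, whereas the paper packages the same computations into Claims~\ref{cl:2} and~\ref{cl:2a} about disjointness of the sets $F_a^s$.
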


\begin{proof} To derive a contradiction, assume that $s\ell_1\ne s\ell_2$. Let $o$ be the unique common point of the lines $\ell_1,\ell_2$. Then $\{o\}=\ell_1\cap \ell_2\subseteq F^{s\ell_1}_{a\ell_1}\cap F^{s\ell_2}_{a\ell_2}$ and hence $F^{s\ell_1}_{a\ell_1}\cap F^{s\ell_2}_{a\ell_2}=\{o\}$, by Claim~\ref{cl:2a}. Choose any points $x_1\in \ell_1\setminus\{o\}$ and $x_2\in\ell_2\setminus\{o\}$, and consider the line $\ell\defeq \{(1-t){\cdot}x_1+t\cdot x_2:t\in\IR\}$. 
Since $s\ell\in\{-1,+1\}=\{s\ell_1,s\ell_2\}$, there exists a number $i\in\{1,2\}$ such that $s\ell=s\ell_i$. Let $j$ be the unique number in the set $\{1,2\}\setminus\{i\}$. By Claim~\ref{cl:2}, $x_i\in\ell\cap \ell_i\subseteq F^{s\ell}_{a\ell}\cap F^{s\ell_i}_{a\ell_i}$ implies $a\ell=a\ell_i$. Then $x_j\in \ell\cap \ell_j\subseteq F^{s\ell}_{a\ell}\cap F^{s\ell_j}_{a\ell_j}=F^{s\ell_i}_{a\ell_i}\cap F^{s\ell_j}_{a\ell_j}=\{o\}$, which contradicts the choice of the point $x_j\ne o$. This contradiction shows that $s\ell_1=s\ell_2$. Since $\varnothing \ne \ell_1\cap \ell_2\subseteq F^{s\ell_1}_{a\ell_1}\cap F^{s\ell_2}_{a\ell_2}$, we can apply Claim~\ref{cl:2} and conlcude that $a\ell_1=a\ell_2$.
\end{proof}

\begin{claim}\label{cl:5} For any lines $\ell_1$ and $\ell_2$ in $K$, we have $s\ell_1=s\ell_2$ and $a\ell_1=a\ell_2$.
\end{claim}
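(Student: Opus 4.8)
The plan is to upgrade Claim~\ref{cl:4}, which handles only \emph{intersecting} lines, to arbitrary pairs of lines by exploiting the fact that any two lines in $K$ can be connected through a finite chain of pairwise intersecting lines. Since $K=\operatorname{span}(Q)$ has dimension at least one (indeed, it contains a topological copy of $\mathbb R$, as noted after the construction), and since we have assumed $X$ contains more than one point, there is genuine freedom to move between lines.

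First I would fix two arbitrary lines $\ell_1,\ell_2$ in $K$. The key observation is that the quantities $s\ell$ and $a\ell$ are \emph{locally constant} along chains of intersecting lines, by Claim~\ref{cl:4}: whenever two lines meet, they share the same sign and point. So it suffices to produce, for any two lines, a finite sequence of lines $\ell_1=m_0,m_1,\dots,m_k=\ell_2$ in which each consecutive pair $m_{i},m_{i+1}$ has nonempty intersection; then iterating Claim~\ref{cl:4} gives $s\ell_1=sm_1=\dots=s\ell_2$ and likewise $a\ell_1=a\ell_2$.

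The construction of such a chain is the routine geometric heart of the argument, and I expect it to be the main (though modest) obstacle. If $\ell_1$ and $\ell_2$ already intersect, Claim~\ref{cl:4} finishes immediately. If they are disjoint, I would pick a point $x_1\in\ell_1$ and a point $x_2\in\ell_2$ with $x_1\ne x_2$ and form the connecting line $m\defeq\{(1-t){\cdot}x_1+t{\cdot}x_2:t\in\IR\}$ through $x_1$ and $x_2$. Since $x_1\in\ell_1\cap m$ and $x_2\in m\cap\ell_2$, both intersections are nonempty, so $m$ is intersecting with each of $\ell_1$ and $\ell_2$. Thus $\ell_1,m,\ell_2$ is the desired chain of length two, and two applications of Claim~\ref{cl:4} yield $s\ell_1=sm=s\ell_2$ and $a\ell_1=am=a\ell_2$.

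The only point requiring a moment of care is that the connecting line $m$ be genuinely a line, i.e.\ that $x_1\ne x_2$; this can always be arranged since each $\ell_i$ is infinite (being an isometric copy of $\IR$) and the two lines are disjoint, so one may freely choose distinct points $x_1\in\ell_1$ and $x_2\in\ell_2$. With the sign $s$ and the point $a$ thus shown to be independent of the line, one obtains a single global sign $s\in\{-1,+1\}$ and a single point $a\in H_{<\alpha}$ with $K=\bigcup_{\ell}\ell\subseteq F_a^s$, which is exactly what is needed to conclude the plastic rigidity of $H$.
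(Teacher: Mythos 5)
Your proposal is correct and is essentially the paper's own argument: the paper likewise picks an auxiliary line meeting both $\ell_1$ and $\ell_2$ (e.g.\ the line through a point of each) and applies Claim~\ref{cl:4} twice. The extra care you take about $x_1\ne x_2$ and the chain formalism is fine but adds nothing beyond the paper's one-line proof.
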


\begin{proof} Given any lines $\ell_1,\ell_2$ in $K$, choose a line $\ell$ in $K$ such that $\ell_1\cap \ell\ne\emptyset\ne \ell_2\cap \ell$. Claim~\ref{cl:2} ensures that $s\ell_1=s\ell=s\ell_2$ and $a\ell_1=a\ell=a\ell_2$.
\end{proof}

By Claim~\ref{cl:5}, there exist $s\in\{-1,+1\}$ and $a\in H_{<\alpha}$ such that $s\ell=s$ and $a\ell=a$ for all lines $\ell$ in $K$. Since the normed space $K$ contains more than one point, every point $x\in K$ is contained in some line $\ell_x\subseteq K$. Then $x\in \ell_x\subseteq F^{s\ell_x}_{a\ell_x}=F^s_a$ and hence $K\subseteq F^s_a$, which implies $f(x)=f_\alpha(x)=s{\cdot}x+a$ for all $x\in H$, and witnesses that the group $H$ is plastically rigid. \hfill $\square$
		
\section{Final remarks and open problems}\label{s:openprob}

By Theorem~\ref{t:main1}, no countable dense subspace of the real line is plastic. On the other hand, by Example~\ref{e:Bielas}, the real line contains a plastic dense subspace $X$ of cardinality $|X|=\aleph_1$. However, the plastic subspace constructed in Example~\ref{e:Bielas} is not a subgroup of the real line. 

\begin{problem} Is it true that every dense subgroup $G\subset\IR$ of cardinality $|G|<\mathfrak c$ is not plastic?
\end{problem}

A subset $X$ of a real line $\IR$ is called {\em $\aleph_1$-dense} if for every nonempty open set $U\subseteq\IR$, the intersection $X\cap U$ has cardinality $\aleph_1$. By \cite[Theorem 9.6]{Baum}, under PFA (the Proper Forcing Axiom), any two $\aleph_1$-dense subspaces of the real line are order isomorphic and hence homeomorphic. It is known \cite{Velickovic} that PFA implies that $\mathfrak c=\aleph_2$.

\begin{problem} Is it true that under {\rm PFA} every $\aleph_1$-dense subspace (subgroup) of the real line is not plastic?
\end{problem}


\begin{remark}
The group $H$ constructed in the proof of Theorem~\ref{t:main3} has cardinality $\mathfrak c$ (because it intersects every Cantor set in $K$) and is not analytic. Indeed, assuming that $H$ is analytic, we can apply Proposition~\ref{p:Cantor-set} and find a Cantor set $C\subseteq H$. Then $H$ would not intersect the Cantor sets $x+C$ for all $x\in K\setminus H$, which contradicts the property (3) of the inductive construction of the group $H$. The non-analycity of the plastic subgroup $H$ motivates the following problem.
\end{remark}

\begin{problem} Is it true that every normed space contains a plastic dense analytic subgroup? In particular, is there a plastic dense analytic subgroup in the real line?
\end{problem}

The dense plastic space $X$, constructed in the proof of Theorem~\ref{t:main2} is also not analytic, at least in case $Y=\IR$. However the real line does contain an analytic (even $\sigma$-compact) plastic dense subspace.

\begin{example}\label{ex:main} The real line contains a plastic dense $\sigma$-compact $k$-crowded subspace.
\end{example}
	
\begin{proof} Consider the closed set  $Z\defeq \bigcup_{n\in\IZ}[2n,2n+1]$ in $\IR$. Fix any countable base $\{B_n\}_{n\in\w}$ of the topology of the space $\IR\setminus Z$ such that each open set $B_n$ is not empty and hence contains a Cantor set $C_n$. Then the union $C\defeq \bigcup_{n\in\w}C_n$ is a dense $\sigma$-compact $k$-crowded set in $\IR\setminus Z$. Moreover, $C$ is zero-dimensional, being the countable union of compact zero-dimensional spaces, see The Countable Sum Theorem~7.2.1 in \cite{Engelking}.  Then $X:=C\cup Z$ is a dense $\sigma$-compact $k$-crowded subspace of the real line. We claim that the metric space $X$ is plastic.  Given any non-expansive bijection $f\colon X \to X$, we should prove that $f$ is an isometry.

\begin{claim}\label{cl:7} $f[\IZ]\subseteq \IZ$.
\end{claim}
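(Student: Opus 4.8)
The plan is to exploit the connected-component structure of $X$. Throughout, the metric is the Euclidean one, ${\sf d}(x,y)=|x-y|$, so the non-expansive bijection $f$ is in particular continuous. First I would identify the connected components of $X$. Since $C$ is zero-dimensional it is totally disconnected, so each of its points is a one-point component; moreover every open gap $(2k+1,2k+2)$ meets $X$ only in $C$, hence no nondegenerate interval of $\IR$ contained in $X$ can meet a gap. Thus the nondegenerate connected components of $X$ are exactly the unit intervals $I_n\defeq[2n,2n+1]$, $n\in\IZ$ (whose union is $Z$), while all other components are singletons $\{c\}$, $c\in C$. The integers are precisely the endpoints of the nondegenerate components, i.e. $\IZ=\{2n,2n+1:n\in\IZ\}$, so $f[\IZ]\subseteq\IZ$ is a statement about how $f$ moves these endpoints.

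Next, because $f$ is continuous and injective and each $I_n$ is compact and connected, the image $f[I_n]$ is a nondegenerate compact connected subset of $X$, hence a closed interval lying inside a single component: $f[I_n]=[a_n,b_n]\subseteq I_{\sigma(n)}$ for some $\sigma(n)\in\IZ$. As a continuous bijection from a compact space to a Hausdorff space, $f{\restriction}_{I_n}$ is a homeomorphism onto $[a_n,b_n]$ and therefore carries the endpoints $\{2n,2n+1\}$ onto the endpoints $\{a_n,b_n\}$; non-expansiveness gives $b_n-a_n=|f(2n)-f(2n+1)|\le 1$. In particular $f[Z]\subseteq Z$, and the desired conclusion reduces to showing that each $f[I_n]$ is the \emph{whole} component $I_{\sigma(n)}$, equivalently $f[Z]=Z$: for then $\{f(2n),f(2n+1)\}=\{2\sigma(n),2\sigma(n)+1\}\subseteq\IZ$.

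The key tool is a Baire-category argument showing that $f[C]$ contains no nondegenerate interval. Indeed, suppose some interval $(s,t)\subseteq I_m$ were disjoint from $f[Z]$. By surjectivity of $f$, every point of $(s,t)$ has its $f$-preimage in $C$, so $S\defeq f^{-1}[(s,t)]\subseteq C$ and $f{\restriction}_S\colon S\to(s,t)$ is a continuous bijection. Writing $C=\bigcup_{k\in\w}C_k$ with each $C_k$ a Cantor set, the set $S=\bigcup_{k}(S\cap C_k)$ is $\sigma$-compact (open subsets of the compact pieces $C_k$ are $\sigma$-compact), say $S=\bigcup_{k\in\w}S_k$ with each $S_k$ compact. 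Each $f[S_k]$ is a continuous bijective image of a compact zero-dimensional space, hence a homeomorphic copy of $S_k$, hence a compact zero-dimensional—and therefore nowhere dense—subset of $(s,t)$. Then $(s,t)=\bigcup_k f[S_k]$ would be a countable union of nowhere dense sets, contradicting the Baire Category Theorem. Consequently $f[Z]\cap I_m$ is dense in every $I_m$ it meets.

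I expect the main obstacle to be upgrading this density to full coverage $f[Z]=Z$. The intervals $f[I_n]$ with $\sigma(n)=m$ are pairwise disjoint closed subintervals whose union is dense in $I_m$; Sierpi\'nski's theorem (a nondegenerate interval cannot be split into countably many $\ge 2$ disjoint nonempty closed sets) forbids this union from being an exact partition, so the real task is to exclude a nowhere-dense leftover $L=I_m\setminus f[Z]\subseteq f[C]$. Here one must use non-expansiveness quantitatively together with the rigid geometry (unit intervals separated by unit gaps). Passing to the unique non-expansive extension $\bar f\colon\IR\to\IR$ (which is surjective, since its image is an interval containing the dense set $X$), one checks that for $w\in L$ and $w_k\to w$ with $w_k\in f[Z]$, continuity forces the preimages $f^{-1}(w_k)\in Z$ either to converge to $f^{-1}(w)\in C$—impossible, as a point of an open gap is bounded away from $Z$—or to escape to infinity, so that infinitely many distant intervals $I_{n_k}$ are strongly contracted into the single bounded interval $I_m$. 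Ruling out this collapse (equivalently, proving $\bar f$ is monotone, so that each $I_n$ maps onto a full component) is the delicate point; I would attack it by combining non-expansiveness on the endpoint lattice $\IZ$ with the covering and bijectivity constraints near infinity.
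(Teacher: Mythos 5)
Your first two steps are sound and close to the paper's: identifying the nondegenerate components of $X$ as the blocks $[2n,2n+1]$, deducing $f[Z]\subseteq Z$ with $f[[2n,2n+1]]=[a_n,b_n]$ a subinterval of a single block of length $\le 1$ whose endpoints are $f(2n),f(2n+1)$, and then the Baire-category argument (via $\sigma$-compactness and zero-dimensionality of $C$, hence of $f[C]$) showing that $f[C]$ contains no nondegenerate interval, so that $f[Z]$ is dense in every block it meets. All of that is correct and is essentially the same machinery the paper uses.

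The genuine gap is the step you yourself flag as ``the delicate point'': excluding the possibility that some $f(z)$, $z\in\IZ$, lands in the interior $(2k,2k+1)$ of a block without being an integer --- equivalently, that some $[a_n,b_n]$ is a proper subinterval of its block. You only offer a plan here, and the plan does not close: the escape-to-infinity alternative (infinitely many distant blocks $I_{n_k}$ being contracted into the single block $I_m$) is \emph{not} forbidden by non-expansiveness or by bijectivity in any way you exhibit, since the images $[a_{n_k},b_{n_k}]$ are merely pairwise disjoint subintervals of $I_m$ and nothing a priori prevents their lengths from summing to at most $1$; likewise $f^{-1}$ is not assumed continuous, so ``continuity forces the preimages to converge'' needs the compactness/subsequence repair and still only yields the two alternatives, neither of which you refute. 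The paper finishes instead with a second, sharper Baire argument localized to the closed leftover set $F\defeq(2k,2k+1)\setminus\bigcup_{n}f[(2n,2n+1)]$: under the assumption $f(z)\in(2k,2k+1)\setminus\IZ$ this set is nonempty (it contains $f(z)$, since the intervals $f[[2n,2n+1]]$ are pairwise disjoint and $f(z)$ is an endpoint of one of them), it is a Polish space contained in $B\cup f[C]$ where $B\defeq(2k,2k+1)\cap f[\IZ]$ is countable, the set $B$ is dense in $F$ (every point of $F$ is approximated by boundary points of the dense open set $\bigcup_n f[(2n,2n+1)]$, and those boundary points lie in $f[\IZ]$) and consists of non-isolated points of $F$, while $F\cap f[C]$ is $\sigma$-compact and misses the dense set $B$, hence is meager in $F$. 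Thus $F$ would be a nonempty perfect Polish space that is meager in itself, contradicting the Baire theorem. Without an argument of this kind (or a working substitute), your proof establishes only $f[\IZ]\subseteq Z$ together with density of $f[Z]$ in the blocks it meets, not the claim $f[\IZ]\subseteq\IZ$.
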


\begin{proof} Given any integer number $z\in\IZ$, find a unique number $m\in\IZ$ such that $z\in\{2m,2m+1\}$. 

Assuming that $f(z)\notin Z$, we can find, by the continuity of $f$, a connected neighborhood $U$ of $z$ in $[2m,2m+1]$ such that $f[U]\subseteq X\setminus Z\subseteq C$.  Then the connected subspace $f[U]$ of the zero-dimensional space $C$ is a singleton, which contradicts the bijectivity of $f$. This contradiction shows that $f(z)\in Z$.

Next, assume that $f(z)\in Z\setminus \IZ=\bigcup_{n\in\IZ}(2n,2n+1)$ and hence $f(z)\in W\defeq (2k,2k+1)$ for some $k\in\IZ$. 
The continuity and bijectivity of $f$ ensures that for every $n\in\IZ$, the set $I_n\defeq\{f(x):2n\le x\le 2n+1\}$ is a closed interval in the real line and the set $J_n\defeq\{f(x):2n<x<2n+1\}$ is its interior in $\IR$. 
Consider the closed subset $F=W\setminus\bigcup_{n\in\IZ}J_n$ in the open interval $W=(2k,2k+1)$, and observe that it contains the countable set $B\defeq W\cap f[\IZ]\ni f(z)$. 
It follows that $F\subseteq B\cup f[C]$. The continuity and bijectivity of $f$ implies that the image $f[C]$ of the $\sigma$-compact zero-dimensional space $C$ is $\sigma$-compact and zero-dimensional.  Then the set $F\subseteq B\cup f[C]$ is zero-dimensional and nowhere dense in $W$, which implies that the complement $W\setminus F=W\cap\bigcup_{n\in\IZ}J_n$ is dense in $W$.

We claim that the set $B$ is dense in $F$. Indeed, take any point $x\in F$ and any $\e>0$. By the density of the set $W\cap\bigcup_{n\in\IZ}J_n$ in $W$, there exist $n\in \IZ$ and point $y\in (0,1)\cap J_n$ such that $|x-y|<\e$. Since $x\notin J_n$, the interval $[x,y]$ contains a boundary point $b\in B$ of the open interval $J_n$. Then $|x-b|\le |x-y|<\e$, witnessing that $B$ is dense in $F$. The continuity and bijectivity of $f$ implies also that the boundary point $b$ of $J_n$ is not isolated in the set $F$. 

 By the bijectivity of $f$, the $\sigma$-compact set $W\cap f[C]$ does not intersect the dense subset $B=W\cap f[\IZ]$ of $W$ and hence is of the first Baire category in $W$. Then the Polish space $W=B\cup (W\cap f[C])$ is of the first Baire category, which contradicts the Baire Theorem. This contraduction shows that $f(z)\in \IZ$.
 \end{proof}
 
By Claim~\ref{cl:7}, $f[\IZ]\subseteq\IZ$. Consider the integer number $b\defeq f(0)$. Since $0<|f(0)-f(1)|\le 1$, there exists a number $s\in\{-1,1\}$ such that $f(1)=b+s$. Consider the isometry $g:\IR\to\IR$, $g:x\mapsto b+s{\cdot}x$, of the real line. 

\begin{claim} $f(z)=g(x)$ for all $z\in\IZ$.
\end{claim}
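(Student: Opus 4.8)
The plan is to prove $f(z)=g(z)=b+s\cdot z$ for every $z\in\IZ$ by a two-sided induction anchored at the base cases $f(0)=b$ and $f(1)=b+s$, which hold by the very definitions of $b$ and $s$. The whole statement reduces to the observation that $f$ restricts on $\IZ$ to a self-avoiding unit-step walk, and such a walk is forced to be monotone with direction dictated by its first step.

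First I would record the local structure of $f$ on $\IZ$. By Claim~\ref{cl:7}, $f(n)\in\IZ$ for every $n\in\IZ$. Non-expansivity gives $|f(n+1)-f(n)|\le|(n+1)-n|=1$, while injectivity of $f$ forces $f(n)\ne f(n+1)$ and hence $|f(n+1)-f(n)|\ge 1$; therefore $f(n+1)-f(n)\in\{-1,+1\}=\{-s,s\}$ for all $n$. Thus consecutive values of $f{\restriction}_\IZ$ differ by exactly one unit. The decisive step is to rule out any reversal of this unit step. Arguing forward, if $f(n-1)=b+(n-1)s$ and $f(n)=b+ns$ are already known, then the alternative $f(n+1)=f(n)-s$ would yield $f(n+1)=b+(n-1)s=f(n-1)$, contradicting injectivity; hence $f(n+1)=f(n)+s=b+(n+1)s$. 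Running the identical argument backward, using $f(n)$ and $f(n+1)$ to determine $f(n-1)$, extends the formula to all negative indices. Together with the base cases this gives $f(z)=b+s\cdot z=g(z)$ for all $z\in\IZ$.

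No serious obstacle appears: the content is the elementary fact that a non-expansive injective integer-valued map fixing the step from $0$ to $1$ must be the affine isometry $n\mapsto b+sn$. The only subtlety is to invoke injectivity of $f$, rather than its bijectivity on the full space $X$, at each inductive step in order to exclude the sign flip; surjectivity of $f{\restriction}_\IZ$ plays no role in this particular claim.
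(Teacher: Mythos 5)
Your proof is correct and takes essentially the same route as the paper's: a two-sided induction anchored at $f(0)=b$ and $f(1)=b+s$, using non-expansivity together with integrality (Claim~\ref{cl:7}) and injectivity to force unit steps and to exclude a sign reversal at each stage. The paper phrases the induction as one statement over all $|z|<n$ rather than two separate forward and backward inductions, but the content is identical.
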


\begin{proof} Taking into account that $0<|f(-1)-f(0)|\le 1$, we conclude that $f(-1)=b-s=g(-1)$ is a unique integer number such that $|f(-1)-f(0)|=1$ and $f(-1)\ne f(1)=b+s$.
Therefore, the equality $f(z)=g(z)$ holds for all integer numbers $z$ with $|z|\le 1$.

 Assume that for some integer $n\ge 2$, the equality $f(z)=g(z)$ has been proved for all integer numbers $z\in Z$ with $|z|<n$. The inductive hypothesis implies $f(n-2)=g(n-2)=b+s\cdot(n-2)$ and $f(n-1)=g(n-1)=b+s\cdot(n-1)$. Taking into account that $0<|f(n)-f(n-1)|\le 1$, and $f(n)\ne f(n-2)=f(n-1)-s$, we conclude that $f(n)=f(n-1)+s=b+s\cdot n=g(n)$. By analogy we can show that $f(-n)=f(-n+1)-s=b+s\cdot(-n)=g(-n)$.
This completes the inductive step.

The Principle of Mathematical Induction ensures that $f(z)=g(z)$ for all $z\in\IZ$.
\end{proof}

\begin{claim}\label{cl:9} $f(x)=g(x)$ for all $x\in X$.
\end{claim}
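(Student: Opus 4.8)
The plan is to upgrade the agreement $f=g$ on $\IZ$ (established in the previous claim) first to the whole of $Z$ and then to the points of $C$, so that $f$ and the isometry $g$ coincide on $X=Z\cup C$; this makes $f=g$ an isometry and thereby finishes the proof that $X$ is plastic.

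First I would treat the intervals constituting $Z$. Fix $n\in\IZ$ and regard $[2n,2n+1]$ with the metric inherited from $\IR$; being a compact subset of the real line, it is a metric interval with endpoints $2n$ and $2n+1$. Since $f(2n)=g(2n)$ and $f(2n+1)=g(2n+1)$ by the previous claim, and $g$ is an isometry of $\IR$, we have $|f(2n)-f(2n+1)|=|g(2n)-g(2n+1)|=1={\sf d}(2n,2n+1)$. As $f{\restriction}_{[2n,2n+1]}$ is non-expansive, Lemma~\ref{l:3} shows that it is an isometry. The affine map $g{\restriction}_{[2n,2n+1]}$ is also an isometry, and the two maps agree at the endpoints $2n,2n+1$; since the real line is strictly convex, Lemma~\ref{l:4} forces $f(x)=g(x)$ for all $x\in[2n,2n+1]$. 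Taking the union over $n\in\IZ$ yields $f=g$ on $Z$.

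It remains to handle a point $c\in C$. Such a $c$ lies in a gap $(2k+1,2k+2)$ of $Z$ whose endpoints $2k+1,2k+2$ are integers, where $f$ already agrees with $g$. Put $w\defeq f(c)$ and $c'\defeq g^{-1}(w)$, using that $g$ is a bijective isometry of $\IR$. The non-expansiveness of $f$ gives $|w-g(2k+1)|\le c-(2k+1)$ and $|w-g(2k+2)|\le (2k+2)-c$; applying the isometry $g^{-1}$ turns these into $|c'-(2k+1)|\le c-(2k+1)$ and $|c'-(2k+2)|\le (2k+2)-c$. Adding the two inequalities shows $|c'-(2k+1)|+|c'-(2k+2)|\le 1=|(2k+2)-(2k+1)|$, so by the triangle inequality $c'$ lies in $[2k+1,2k+2]$; then the two individual inequalities squeeze $c'$ to equal $c$. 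Hence $w=g(c)$, that is $f(c)=g(c)$, completing the proof that $f=g$ on $X$.

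I expect the only genuine subtlety to be the passage on $Z$: one must recognize each $[2n,2n+1]$ as a metric interval and verify that the hypothesis of Lemma~\ref{l:3} holds precisely because $g$ preserves the unit distance between consecutive integer endpoints, so that strict convexity of $\IR$ via Lemma~\ref{l:4} pins $f$ down uniquely. The argument on $C$ is then a short and robust two-sided distance estimate that rigidly locates each gap point using only the integer endpoints of its gap; the key observation there is that non-expansiveness toward both endpoints simultaneously leaves no freedom, so no extra information about the internal structure of $C$ is needed.
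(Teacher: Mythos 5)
Your proof is correct and follows essentially the same route as the paper, which for any $x\in[n,n+1]$ simply observes that $[n,n+1]\cap X$ is a metric interval with endpoints $n,n+1$ and applies Lemma~\ref{l:4} to the two non-expansive maps $f,g$ that agree at these endpoints. Your only deviation is that for points of $C$ you replace the appeal to the metric-interval machinery by an equivalent direct two-sided distance estimate, which is a harmless and slightly more self-contained variant of the same idea.
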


\begin{proof} Given any $x\in X$, find an integer number $n$ such that $x\in[n,n+1]$. Observe that $[n,n+1]\cap X\subseteq\IR$ is a metric interval with endpoints $n$ and $n+1$.
Since $f,g$ are two non-expansive maps with $f(n)=g(n)$ and $f(n+1)=g(n+1)$, we can apply Lemma~\ref{l:4} and conclude that $f(x)=g(x)$.
\end{proof}

By Claim~\ref{cl:9}, the non-expansive bijection $f=g{\restriction}_X$ is an isometry, witnessing that the $\sigma$-compact space $X$ is plastic.
\end{proof}

Example~\ref{ex:main} and Theorems~\ref{t:main2}, \ref{t:main3} motivate the following problems.

\begin{problem}
Is it true that every $k$-crowded separable metric space contains a plastic dense $\sigma$-compact $k$-crowded subspace?
\end{problem}

\begin{problem}
Is it true that every strictly convex metric abelian group contains a dense plastic $\sigma$-compact subgroup? In particular, does $\IR$ contain such a subgroup?
\end{problem}

Since the real plane $\IR\times\IR$ endowed with the Euclidean norm $\|(x,y)\|_2=\sqrt{x^2+y^2}$ is a strictly convex normed space, it contains a plastic dense subgroup, by Theorem~\ref{t:main3}. On the other hand, for the plane $\IR\times\IR$ endowed with the $\ell_1$-norm $\|(x,y)\|_1=|x|+|y|$, Theorem~\ref{t:main3} is not applicable, which leads to the following open problem.

\begin{problem} Is there a dense plastic subgroup in the real plane with the $\ell_1$-norm?
\end{problem}

\section{Acknowledgement}

The authors express their sincere thanks to Wojciech Bielas for the solution a problem from the initial version of our paper that leads to construction of a plastic space in Example~\ref{e:Bielas} and characterization of the Continuum Hypothesis in Corollary~\ref{c:Bielas-CH}.

\newpage
	
	

\end{document}